\newtheorem{theorem}{Theorem}
\newtheorem{proposition}{Proposition}
\newtheorem{corollary}{Corollary}
\theoremstyle{definition}
\newtheorem{remark}{Remark}
\newcommand*\diff{\mathop{}\!\mathrm{d}}
\newcommand{\stirlingone}[2]{\genfrac{[}{]}{0pt}{}{#1}{#2}}
\newcommand{\stirlingtwo}[2]{\genfrac{\lbrace}{\rbrace}{0pt}{}{#1}{#2}}
\newcommand{\idop}{\mathbbm{1}}
\newcommand{\RR}{\mathbb{R}}
\newcommand{\PP}{\mathbf{P}}
\newcommand{\EE}{\mathbf{E}}
\newcommand{\LL}{\mathcal{L}}
\newcommand{\ee}{\mathrm{e}}
\newcommand{\orig}{\mathbf{0}}
\newcommand{\X}{\mathbf{X}}
\title{Occupation times on the legs of a diffusion spider}
\author{
Paavo Salminen%
\thanks{\AA bo Akademi University, \AA bo, Finland; 
\href{mailto:phsalmin@abo.fi}{phsalmin@abo.fi}}
\and 
David Stenlund%
\thanks{University of British Columbia, Vancouver~BC, Canada; 
\href{mailto:stenlund@math.ubc.ca}{stenlund@math.ubc.ca} \newline
The work by D.S. was in part funded through research grants from the Magnus Ehrnrooth Foundation and the Swedish Cultural Foundation in Finland. }
}
\date{}
\begin{document}

\maketitle
\setstretch{1.1}

\begin{abstract}
We study the joint moments of occupation times on the legs of a diffusion spider. 
Specifically, we give a recursive formula for the Laplace transform of the joint moments, which extends earlier results for a one-dimensional diffusion. 
For a Bessel spider, of which the Brownian spider is a special case, our approach yields an explicit formula for the joint moments of the occupation times. 

\bigskip\noindent
\textbf{Keywords:} Diffusions on graphs; Walsh's Brownian motion; Green's function; resolvent; Kac's moment formula; additive functional; moment generating function

\bigskip\noindent
\textbf{AMS Classification:} 60J60 (primary), 60J55, 60J65, 05A10 (secondary)
%60J60: Diffusion processes 
%60J55: Local time and additive functionals 
%60J65: Brownian motion 
%05A10: Factorials, binomial coefficients, combinatorial functions
\end{abstract}

\medskip

\section{Introduction}

The process known as \emph{Walsh Brownian motion} was introduced by J.B.~Walsh in 1978 as an extension of the skew Brownian motion. 
The Walsh Brownian motion lives in~$\RR^2$, best expressed using polar coordinates. 
When away from the origin, the angular coordinate stays constant (so the process moves along a line), while the radial distance follows an excursion from 0 of a standard Brownian motion. 
Intuitively and roughly speaking, every time the process reaches the origin a new angle is randomly selected according to some distribution on~$[0,2\pi)$. 
This process was brilliantly described by Walsh in the following way~\cite{Walsh1978}:

\begin{quote}
It is a diffusion which, when away from the origin, is a Brownian motion along a ray, but which has what might be called a \emph{roundhouse singularity} at the origin: when the process enters it, it, like Stephen Leacock's hero, immediately rides off in all directions at once.
\end{quote}
\noindent
The construction of Walsh Brownian motion was described in more detail by Barlow, Pitman and Yor~\cite{BarlowPitmanYor1989walsh}. We refer also to Salisbury \cite{Salisbury1986} and Yano \cite{Yano2017}.

If the angle is selected according to a discrete distribution, then there are at most countably many rays on which the diffusion lives. 
The state space of the process then corresponds to a star graph with edges of infinite length, and we call such a graph a \emph{spider}. 
Thus the Walsh Brownian motion can, in this case, be seen as an early example of a diffusion on a graph, and we call this process a \emph{Brownian spider}. 
To make it more general, we can forego the requirement that the radial distance follows a Brownian motion and replace it with excursions from 0 of a regular reflected non-negative recurrent one-dimensional diffusion. 
In this paper, such a process is simply called a \emph{diffusion spider}, which can also be seen as an abbreviation for ``diffusion process on a spider''.  

Diffusions on graphs have been under intensive research at least since the pioneering work by Freidlin and Wentzell \cite{FreidlinWentzell1993}. 
We refer to Weber \cite{Weber2001} for earlier references, but also for a study in the direction of our paper. 
In addition to \cite{Walsh1978, BarlowPitmanYor1989walsh, BarlowPitmanYor1989arcsinus} concerning diffusions on spiders, we recall, in particular, the papers by 
Papanicolaou, Papageorgiou and Lepipas \cite{PapanicolaouPL2012}, 
Vakeroudis and Yor \cite{VakeroudisYor2012}, 
Fitzsimmons and Kuter \cite{FitzsimmonsKuter2014},  
Yano~\cite{Yano2017}, 
Csáki, Csörg\H{o}, Földes and Révész \cite{CsakiCFR2016, CsakiCFR2019}, 
Ernst \cite{Ernst2016}, 
Karatzas and Yan \cite{KaratzasYan2019}, 
Bayraktar and Zhang \cite{BayraktarZhang2021}, 
Atar and Cohen~\cite{AtarCohen2019}, 
Lempa, Mordecki and Salminen~\cite{LempaMordeckiSalminen2024}, 
and Bednarz, Ernst and Os\k{e}kowski~\cite{BednarzEO2024}. 

For results on occupation times and further earlier references, see \cite{Yano2017} where the joint law of the occupation times on legs of a diffusion spider (there called a ``multiray diffusion'') is characterized via a double Laplace transform formula generalizing the results in \cite{BarlowPitmanYor1989arcsinus} for a spider with excursions following a Bessel process. 
We also refer to~\cite{Yano2017} for a formula for the density of the joint law. 
In \cite{PapanicolaouPL2012} and  \cite{VakeroudisYor2012} are considered occupation times for a Brownian spider, and in \cite{CsakiCFR2019} focus is on limit theorems for local and occupation times for the Brownian  spider. 
A brief overview of the work by R\'ev\'esz et al.\ on Brownian spiders and random walks on spiders is given in~\cite{CsakiFoldes2024}.

In this paper, we study the joint moments of the occupation times on different legs of a diffusion spider. 
The theorems are extensions of our previous results for one-dimensional diffusions~\cite{SalminenStenlund2021}. 
Our method closely follows the earlier paper, namely, we apply a version of the Kac moment formula to obtain a recursive expression for the Laplace transform of the joint moments in the general case, and directly for the joint moments when the diffusion spider is self-similar. 
To derive  these expressions we need regularity properties of the Green function of a diffusion spider, which are obvious from the explicit form of the Green function derived in \cite{LempaMordeckiSalminen2024}. 
We solve the recursion for a Bessel spider---of which the Brownian spider is a special case---thereby obtaining an explicit formula for the joint moments of occupation times on the legs. 
We also briefly return to the original Walsh Brownian motion at the end of the paper.

\section{Preliminaries}
\label{sec_prelim}

\subsection{Linear diffusions}

To make the paper more self-contained, we first recall the basic facts from the theory of linear diffusions needed to introduce the concept of a diffusion spider. 
Let $X=(X_t)_{t\geq 0}$ be a linear diffusion living on $\RR_+=[0,+\infty)$. Let $\PP_x$ denote the probability measure associated with $X$ when initiated at $x\geq 0$.  For $y\geq 0$ introduce the first hitting time via 
\[
H_y: = \inf \{t\geq 0 : X_t=y\}.
\]
It is assumed that $X$ is regular and recurrent. Hence, for all $x\geq 0$ and $y\geq 0$ it holds that
\[
\PP_x\left( H_y<\infty\right)=1.
\]
Moreover, we suppose that 0 is a reflecting boundary and $+\infty$ is a natural boundary (for the boundary classification for linear diffusions, see \cite{ItoMcKean1974} and \cite{BorodinSalminen2015}).  
The $ \PP_x$-distribution of $H_y$ is characterized for $\lambda>0$ via the Laplace transform  

\begin{equation}
\label{LHIT}
\EE_x\left(\ee^{-\lambda H_y}\right) =
\begin{cases}
\dfrac{\varphi_\lambda (x)}{\varphi_\lambda (y)},& x\geq y,\\[10pt]
\dfrac{\psi_\lambda (x)}{\psi_\lambda (y)},& x\leq y,
\end{cases}
\end{equation}
 where $\EE_x$ refers to the expectation operator associated with $X$ and $\varphi_\lambda$ ($\psi_\lambda$) is a positive, continuous and decreasing (increasing) solution of the generalized differential equation 
\[
\frac d{dm}\frac d{dS}u=\lambda u.
\]
Here, $S$ and $m$ denote the scale function (strictly increasing and continuous) and the speed measure, respectively, associated with $X$. 
Under our assumptions, $m$ is a positive measure. 
To fix ideas, we also assume that $m$ does not have atoms and that $\varphi_\lambda$ and $\psi_\lambda$ are differentiable with respect to $S$. 
Recall that $\varphi_\lambda$ and $\psi_\lambda$ are unique up to multiplicative constants. 
We also introduce the diffusion $X^{\partial}$ with the same speed and scale as $X$ but for which 0 is a killing boundary. 
For $X^{\partial}$ there exist functions $\varphi^{\partial}_\lambda$ and $\psi^{\partial}_\lambda$ describing the distribution of $H_y$ for $X^{\partial}$ similarly as is done in \eqref{LHIT} for~$X$. 

Recall that 
\begin{equation}\label{recall}
\psi^{\partial}(0)=0, \quad \frac{d\psi}{d S}(0+)=0,\quad {\varphi^\partial\equiv\varphi},
\end{equation}
where the notation is shortened by omitting the subindex $\lambda$. Moreover, we normalize, as in~\cite{LempaMordeckiSalminen2024},
\begin{equation}
\label{norm}
S(0)=0,\quad \psi(0)=\varphi(0)=\varphi^\partial(0)=1, \quad \text{and}\quad \frac{d\psi^\partial}{d S}(0+)=1.
\end{equation}
As is well known, $X$ has a transition density $p$ with respect to $m$, i.e., for a Borel subset~$A$ of~$\RR_+$,
\begin{equation*}
\PP_x(X_t\in A)=\int_A p(t;x,y)\,m(dy), 
\end{equation*}
and the Green function (resolvent density) is given by
\begin{align}
\label{gr0}
\nonumber
g_\lambda(x,y)&:=\int_0^\infty \ee^{-\lambda t}\,p(t;x,y)\,dt\\
&=\begin{cases}
w_\lambda^{-1}\psi(y)\varphi(x),&0\leq y\leq x,\\
w_\lambda^{-1}\psi(x)\varphi(y),&0\leq x\leq y,
\end{cases}
\end{align}
with the Wronskian 
\[
w_\lambda=\frac{d\psi}{d S}(x)\varphi(x)-\frac{d\varphi}{d S}(x)\psi(x)=-\frac{d\varphi}{d S}(0+) .
\]

For later use, recall that a diffusion $X$ with starting point $X_0=0$ is called {self-similar} if for any $a> 0$ there exists $b> 0$ such that 
\[
(X_{at})_{t\geq 0} \overset{(d)}{=} (bX_t)_{t\geq 0}.
\]
The perhaps most well-known example of a self-similar diffusion is a standard Brownian motion starting in $0$, for which the above identity holds with $b=\sqrt{a}$.

\subsection{Diffusion spider}

Let $\Gamma\subset \RR^2$ be a star graph with one vertex at the origin of~$\RR^2$ and $R$ edges of infinite length meeting in the vertex (see Figure~\ref{fig_stargraph} for an example). 
Such a graph is here called a spider. 
The edges $L_1, \dotsc, L_R$ of the graph are known as the ``rays'' or---as hereafter called---the ``legs'' of the spider. 
The ordered pair $(x,i)$ describes the point on $\Gamma$ located on leg $L_i$ ($i=1,\dotsc,R$) at the distance $x\geq 0$ to the origin. 
We take the origin to be common to all legs, i.e., 
\[
(0,1)=(0,2)=\dots=(0,R), 
\]
so for simplicity we just write $\orig$ for the origin. 

\begin{figure}[htb]
\begin{center}
\begin{tikzpicture}
\node[circle,fill=black, scale=0.4] at (360:0mm) (center) {};
\foreach \n in {1,...,5}{
	\node at ({138+\n*(-360)/5}:20mm) (n\n) {};
	\node at ({138+\n*(-360)/5+10}:15mm) (lab\n) {\small $L_{\n}$};
	\draw [-stealth] (center)--(n\n);
}
\end{tikzpicture}
\caption{The graph of a diffusion spider with five legs.}
\label{fig_stargraph}
\end{center}
\end{figure}
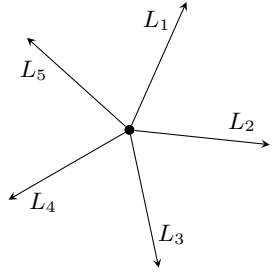

Let $X$ be the linear diffusion introduced above. 
On the graph $\Gamma$ we consider a stochastic process $\X:=(\X_t)_{t\geq 0}$ using the notation 
\[
\X_t:=(X_t,\rho_t), 
\]
where $\rho_t\in\{1,2,\dots,R\}$ indicates the leg on which $\X_t$ is located at time $t$ and $X_t$ is  the distance of $\X_t$ to the origin at time $t$ measured along the leg $L_{\rho_t}$. 
On each leg $L_i$, the process $\X$ behaves like the diffusion $X$ until it hits~$\orig$. 
The process $\X$ is called a (homogeneous) diffusion spider. We could allow different diffusions on the different legs (the inhomogeneous case), but do not do so in this paper. 
As part of the definition of the process, there are positive real numbers  $\beta_i$, $i=1,\dotsc,R$, such that $\sum_{i=1}^R\beta_i=1$. 
When $\X$ hits $\orig$, it  continues, roughly speaking, with probability $\beta_i$ onto leg $L_i$. 
We do not here discuss the rigorous construction of the process; for this, see the references given in the introduction. 
Notations $\PP_{(x,i)}$ and $\EE_{(x,i)}$ are used for the probability measure and expectation, respectively, when the diffusion spider starts in the point $(x,i)$, that is, on leg number $i$ and at a distance $x$ from the origin. 
As mentioned above, we write $\PP_\orig$ and $\EE_\orig$ without specifying a leg when the starting point is the origin. 

For the diffusion spider $\X$, we introduce its Green kernel (also called the resolvent kernel)  via 
\[
\mathbf{G}_\lambda u(x,i):=\int_0^{\infty}e^{-\lambda t}\EE_{(x,i)}\left(u(\X_t)\right)\,dt,
\]
where $(x,i)\in \Gamma$, $\lambda>0$ and $u\colon\Gamma\to\RR$ is a bounded measurable function. Moreover, define 
\begin{align}
&\mathbf{m}(dx,i):=\beta_im(dx),\quad i=1,\dots,R,\qquad \mathbf{m}(\{\orig\}):=0, \label{eq_speedscale}\\
&\mathbf{S}(dx,i):=\frac{1}{\beta_i}S(x). \nonumber
\end{align}
We call $\mathbf{m}$ and $\mathbf{S}$  the speed measure and the scale function, respectively, of $\X$. Clearly, on every leg $L_i$ of the diffusion spider,  
\[
\frac d{d\mathbf{m}}\frac d{d\mathbf{S}}=\frac d{dm}\frac d{dS}.
\]
Let $H_\orig = \inf \{t\geq 0 : \X_t=\orig\}$ be the first hitting time of $\orig$ for the diffusion spider $\X$. Since on every leg of the diffusion spider we have, loosely speaking, the same one-dimensional diffusion, for every $i=1,2,\dotsc, R$ and $x>0$, 
\begin{equation}
\label{CT}
 \EE_{(x,i)}(\ee^{-\lambda H_\orig}) =  \EE_{x}(\ee^{-\lambda H_0}) = \varphi_\lambda (x). 
\end{equation}
The following theorem, proved in \cite{LempaMordeckiSalminen2024}, states an explicit expression for the resolvent density of $\X$. 

\begin{theorem}
\label{Thrm:GreenKernel1}
The Green kernel of the diffusion spider $\X$ has a density~$\mathbf{g}_\lambda$ with respect to the speed measure~$\mathbf{m}$, which is 
given for $ x\geq 0$ and $y\geq 0$ by
\begin{equation}
\label{gk1}
\mathbf{g}_\lambda((x,i),(y,j))=
\begin{cases}
\varphi(y)\tilde{\psi}(x,i), &\quad x\leq y,\ i=j, \\[2pt]
\varphi(x)\tilde{\psi}(y,i), &\quad y\leq x,\ i=j, \\[2pt]
c_\lambda^{-1}\,\varphi(y)\varphi(x), &\quad i\neq j,
\end{cases}
\end{equation}
where 
\begin{equation}
\label{tildepsi}
\tilde{\psi}(x,i):=\frac1{\beta_i}\psi^{\partial}(x)+\frac1{c_\lambda}\varphi(x)
\end{equation}
and 
\begin{equation}
\label{cr1}
c_\lambda:=-\frac{d}{dS}\varphi(0+)>0.
\end{equation}
\end{theorem}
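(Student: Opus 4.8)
The plan is to reduce the computation of $\mathbf{G}_\lambda$ to the one-dimensional data $\varphi$ and $\psi^{\partial}$ by conditioning on the first visit to the vertex, and then to pin down the single remaining unknown---the value of the resolvent at $\orig$---using the regularity of $\mathbf{G}_\lambda u$ at the vertex. Fix a bounded measurable $u\colon\Gamma\to\RR$ and apply the strong Markov property at $H_\orig$. Up to time $H_\orig$ the spider started at $(x,i)$ never leaves leg $L_i$ and there evolves exactly like $X$ killed at $0$ (because $\frac{d}{d\mathbf{m}}\frac{d}{d\mathbf{S}}=\frac{d}{dm}\frac{d}{dS}$ on each leg), and since $\EE_{(x,i)}(\ee^{-\lambda H_\orig})=\varphi(x)$ by \eqref{CT}, this yields
\begin{equation}
\label{eq:plan-decomp}
\mathbf{G}_\lambda u(x,i)=\int_0^\infty g_\lambda^\partial(x,y)\,u(y,i)\,m(dy)+\varphi(x)\,\mathbf{G}_\lambda u(\orig),
\end{equation}
where $g_\lambda^\partial(x,y)=\psi^{\partial}(x\wedge y)\,\varphi(x\vee y)$ is the Green function of $X^{\partial}$; here the Wronskian of $\psi^{\partial}$ and $\varphi^{\partial}=\varphi$ equals $1$ by the normalizations \eqref{recall}--\eqref{norm}. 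The integral is taken against $m$, since it is the resolvent of $X$ killed at $0$. Thus everything is reduced to identifying the functional $\mathbf{G}_\lambda u(\orig)$, which does not depend on $(x,i)$.

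To compute $\mathbf{G}_\lambda u(\orig)$ I would use that $(x,i)\mapsto\mathbf{G}_\lambda u(x,i)$ lies in the domain of the generator of $\X$, hence is continuous at $\orig$ and satisfies the Kirchhoff-type flux balance $\sum_{i=1}^R\frac{d}{d\mathbf{S}}\mathbf{G}_\lambda u(0+,i)=0$ at the vertex (there is no atom of $\mathbf{m}$ at $\orig$). Writing $\frac{d}{d\mathbf{S}}=\beta_i\frac{d}{dS}$ on $L_i$ and differentiating \eqref{eq:plan-decomp} in the scale $S$ at $x=0+$, the normalizations $\psi^{\partial}(0)=0$, $\frac{d\psi^{\partial}}{dS}(0+)=1$ and $\frac{d\varphi}{dS}(0+)=-c_\lambda$ give
\begin{equation}
\label{eq:plan-flux}
\frac{d}{dS}\mathbf{G}_\lambda u(0+,i)=\int_0^\infty\varphi(y)\,u(y,i)\,m(dy)-c_\lambda\,\mathbf{G}_\lambda u(\orig).
\end{equation}
Summing $\beta_i$ times \eqref{eq:plan-flux} over $i$ and using $\sum_i\beta_i=1$ then solves for
\[
\mathbf{G}_\lambda u(\orig)=\frac1{c_\lambda}\sum_{i=1}^R\beta_i\int_0^\infty\varphi(y)\,u(y,i)\,m(dy).
\]
Substituting this back into \eqref{eq:plan-decomp}, rewriting the integral against $\mathbf{m}(dy,i)=\beta_i\,m(dy)$, and using $\varphi(x\wedge y)\varphi(x\vee y)=\varphi(x)\varphi(y)$, the coefficient of $u(y,i)\,\mathbf{m}(dy,i)$ becomes $\frac1{\beta_i}\psi^{\partial}(x\wedge y)\varphi(x\vee y)+\frac1{c_\lambda}\varphi(x)\varphi(y)=\tilde{\psi}(x\wedge y,i)\,\varphi(x\vee y)$, while for $j\neq i$ the coefficient of $u(y,j)\,\mathbf{m}(dy,j)$ is $c_\lambda^{-1}\varphi(x)\varphi(y)$. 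Reading off these kernels and splitting the diagonal case into $x\le y$ and $x\ge y$ reproduces \eqref{gk1}--\eqref{tildepsi}; positivity of $c_\lambda=-\frac{d\varphi}{dS}(0+)$ in \eqref{cr1} follows since $\varphi$ is decreasing, and the symmetry of $\mathbf{g}_\lambda$ under exchanging $(x,i)$ and $(y,j)$ serves as a consistency check.

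The main obstacle is the second step: justifying the exact boundary behavior of $\mathbf{G}_\lambda u$ at the vertex, namely continuity at $\orig$ together with the flux balance $\sum_i\frac{d}{d\mathbf{S}}\mathbf{G}_\lambda u(0+,i)=0$, which is precisely the gluing condition encoded in the construction of $\X$ and in the weights $\beta_i$. A more self-contained route to $\mathbf{G}_\lambda u(\orig)$ bypasses the domain and instead uses It\^o excursion theory at the regular recurrent point $\orig$: decomposing the occupation integral over excursions gives $\mathbf{G}_\lambda u(\orig)=\Phi(\lambda)^{-1}\,n\!\left[\int_0^\zeta\ee^{-\lambda t}u(e_t)\,dt\right]$, where $n$ is the excursion measure and $\Phi$ the Laplace exponent of the inverse local time at $\orig$ (there is no drift, as $\mathbf{m}(\{\orig\})=0$). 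One then verifies $\Phi(\lambda)=c_\lambda$ for the appropriate normalization of local time, and that $n$ distributes mass $\beta_i$ onto $L_i$ with $n[\int_0^\zeta\ee^{-\lambda t}f(e_t)\,dt]$ reproducing $\int\varphi f\,dm$ on each leg, recovering the same value. Either way, once $\mathbf{G}_\lambda u(\orig)$ is known the remaining algebra is routine.
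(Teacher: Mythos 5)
Your argument is essentially correct, but note first that this paper contains no proof of Theorem~\ref{Thrm:GreenKernel1} to compare against: the result is quoted from \cite{LempaMordeckiSalminen2024}, where it is proved. Measured against that reference, your derivation is a legitimate and fairly standard presentation. The strong Markov decomposition at $H_\orig$, with the killed Green function $\psi^{\partial}(x\wedge y)\varphi(x\vee y)$ (whose Wronskian equals $1$ by \eqref{recall} and \eqref{norm}) and the factor $\varphi(x)$ coming from \eqref{CT}, is exactly right, and so is the algebra that converts the value of $\mathbf{G}_\lambda u(\orig)$ into the kernel \eqref{gk1}: the diagonal and off-diagonal coefficients, the passage from $m$ to $\mathbf{m}$, and the symmetry check all go through as you wrote them.

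The one load-bearing step is the one you flag yourself: pinning down the vertex behaviour. Your primary route assumes that $\X$ is given analytically, i.e.\ that its generator domain consists of functions satisfying the flux condition $\sum_{i}\beta_i\frac{d}{dS}f(0+,i)=0$; but the paper defines $\X$ probabilistically (excursions enter leg $L_i$ ``with probability $\beta_i$''), and the equivalence of these two descriptions is itself a nontrivial theorem (cf.\ Barlow--Pitman--Yor \cite{BarlowPitmanYor1989walsh}), so as written this route relocates the difficulty into an unproved identification; there is also the routine caveat that $\mathbf{G}_\lambda u$ lies in the Feller domain only for suitable $u$ (take $u$ continuous first, then extend by a monotone class argument). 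Your excursion-theoretic fallback is the correct repair and matches the construction actually used in \cite{LempaMordeckiSalminen2024} and \cite{Salisbury1986}: with excursion measure $n=\sum_i \beta_i n_i$ and the local time at $\orig$ normalized via the speed measure so that $\Phi(\lambda)=1/g_\lambda(0,0)=c_\lambda$, the master formula yields $\mathbf{G}_\lambda u(\orig)=c_\lambda^{-1}\sum_i\beta_i\int_0^\infty\varphi(y)\,u(y,i)\,m(dy)$, after which your computation closes. Two small repairs: strict positivity in \eqref{cr1} follows because $c_\lambda$ equals the Wronskian $w_\lambda>0$, not merely because $\varphi$ is decreasing (which only gives $c_\lambda\geq 0$); and the differentiation under the integral sign at $x=0+$ deserves a line of justification, splitting the integral at $y=x$ and using $\int_0^\infty\varphi\,dm=c_\lambda/\lambda<\infty$.
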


From the properties of the functions $\psi^{\partial}$ and $\varphi$, we immediately have the following result.

\begin{corollary}
\label{COR1}
The resolvent density (the Green function) $\mathbf{g}_\lambda$ given in \eqref{gk1} is continuous on $\Gamma$, and for every $i$ and~$j$,  
\begin{equation}
\label{CY}
\lim_{(x,i)\to\orig}\mathbf{g}_\lambda((x,i),(y,j))=\mathbf{g}_\lambda(\orig,(y,j))
= \frac1{c_\lambda}\varphi(y)
= g_\lambda(0,y). 
\end{equation}
\end{corollary}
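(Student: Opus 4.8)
The plan is to verify continuity directly from the explicit formula \eqref{gk1}, treating the interior of each leg and the vertex $\orig$ separately, since the only places where a piecewise definition can fail to be continuous are where its branches meet. First I would fix the second argument $(y,j)$ and view $\mathbf{g}_\lambda(\cdot,(y,j))$ as a function of the first argument on $\Gamma$. Away from the origin, on a fixed leg with $i=j$, the two branches of \eqref{gk1} meet along the diagonal $x=y$: there the first branch gives $\varphi(y)\tilde\psi(y,i)$ and the second gives $\varphi(x)\tilde\psi(y,i)$ evaluated at $x=y$, so they coincide. Since $\varphi$ and $\psi^{\partial}$—and hence $\tilde\psi$ from \eqref{tildepsi}—are continuous, the restriction to each leg is continuous; for $i\neq j$ the single branch $c_\lambda^{-1}\varphi(y)\varphi(x)$ is manifestly continuous in $x$.

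The substantive step is continuity at the origin, where all legs are glued together, and this is exactly what \eqref{CY} records. I would compute $\lim_{(x,i)\to\orig}\mathbf{g}_\lambda((x,i),(y,j))$ along each leg index $i$ and show the value does not depend on $i$. For fixed $y>0$ and $x\to 0+$ we eventually have $x\leq y$, so on the leg $i=j$ the relevant branch is $\varphi(y)\tilde\psi(x,i)=\varphi(y)\bigl(\beta_i^{-1}\psi^{\partial}(x)+c_\lambda^{-1}\varphi(x)\bigr)$. The decisive normalizations from \eqref{recall} and \eqref{norm} are $\psi^{\partial}(0)=0$ and $\varphi(0)=1$: the first makes the leg-dependent term $\beta_i^{-1}\psi^{\partial}(x)$ vanish in the limit, so the dependence on $\beta_i$—and thus on the chosen leg—disappears, leaving $c_\lambda^{-1}\varphi(y)$. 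For $i\neq j$ the branch $c_\lambda^{-1}\varphi(y)\varphi(x)$ tends to $c_\lambda^{-1}\varphi(y)$ by $\varphi(0)=1$. The two limits agree, and evaluating \eqref{gk1} directly at $x=0$ confirms $\mathbf{g}_\lambda(\orig,(y,j))=c_\lambda^{-1}\varphi(y)$, so the function matches its vertex value along every leg.

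Finally I would identify this common value with the one-dimensional Green function. Recalling $c_\lambda=-\tfrac{d}{dS}\varphi(0+)=w_\lambda$ and $\psi(0)=1$, formula \eqref{gr0} at $x=0\leq y$ gives $g_\lambda(0,y)=w_\lambda^{-1}\psi(0)\varphi(y)=c_\lambda^{-1}\varphi(y)$, which closes the chain of equalities in \eqref{CY}. I do not expect a genuine obstacle: the argument is a direct computation. The one point deserving emphasis is that the vanishing of $\psi^{\partial}$ at the origin is precisely what decouples the limit from the leg along which one approaches $\orig$—this is the analytic reason the glued function is continuous across the vertex despite the $\beta_i$-dependence of $\tilde\psi$ on each individual leg.
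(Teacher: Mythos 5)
Your proof is correct and follows exactly the route the paper intends: the paper states the corollary as an immediate consequence of the explicit formula \eqref{gk1} together with the properties $\psi^{\partial}(0)=0$, $\varphi(0)=\psi(0)=1$, and $w_\lambda=-\tfrac{d\varphi}{dS}(0+)=c_\lambda$, and your computation is precisely that verification written out, with the key observation (the vanishing of $\beta_i^{-1}\psi^{\partial}(x)$ at the vertex removing all leg dependence) correctly identified.
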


\section{Kac's moment formula}

The tool that we will use to obtain the recursive expression for the joint moments is an extended variant of the Kac moment formula. 
Let $Y$ be a regular diffusion taking values on an interval $E$. 
In spite of some conflict with our earlier notation, here we also let $m$, $p$, and  $\EE_x$ ($x\in E$) denote the speed measure, transition density and expectation operator, respectively, associated with~$Y$. 
Moreover, let $V:E\mapsto\RR$ be a measurable and bounded function and define for $t>0$ the additive functional
\[
A_t(V) := \int_0^t V(Y_s) \diff s. 
\]
The moment formula by M. Kac for integral functionals, see \cite{Kac1951}, i.e.,
\begin{equation*}
\EE_x\bigl((A_t(V))^n\bigr) = n \int_E m(\diff y) \int_0^t p(s;x,y) V(y) \EE_y\bigl((A_{t-s}(V))^{n-1}\bigr) \diff s,
\end{equation*}
can easily be extended into the following formula for the expected value of a product of powers of different functionals. 

\begin{proposition} \label{prop_Kac}
Let $V_1, \dotsc, V_N$ be measurable and bounded functions on $E$. For $t>0$, $x\in E$ and $n_1, \dotsc, n_N\in\{1,2,\dotsc\}$,
\begin{equation}\label{eq_Kac_multi}
\EE_x\left(\prod_{k=1}^N (A_t(V_k))^{n_k}\right) = \sum_{k=1}^N n_k \int_E m(\diff y) \int_0^t p(s;x,y) V_k(y) \EE_y\left(\frac{\prod_{i=1}^N (A_{t-s}(V_i))^{n_i}}{A_{t-s}(V_k)}\right) \diff s.
\end{equation}
\end{proposition}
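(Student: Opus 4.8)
The plan is to deduce the multivariate identity from the single-functional Kac formula recorded just above by a polarization (generating-function) argument, so that no new probabilistic input is needed. Introduce auxiliary real parameters $\theta=(\theta_1,\dots,\theta_N)$ and set $W_\theta:=\sum_{k=1}^N\theta_k V_k$; this is again bounded and measurable, so the stated formula applies to it. Writing $M:=\sum_{k=1}^N n_k$, I would apply that formula to the $M$-th power of $A_t(W_\theta)$,
\[
\EE_x\bigl((A_t(W_\theta))^M\bigr)=M\int_E m(\diff y)\int_0^t p(s;x,y)\,W_\theta(y)\,\EE_y\bigl((A_{t-s}(W_\theta))^{M-1}\bigr)\diff s.
\]
Since $A_t(W_\theta)=\sum_k\theta_k A_t(V_k)$ and likewise for $A_{t-s}(W_\theta)$, both sides are polynomials in $\theta_1,\dots,\theta_N$ of total degree $M$, and the identity holds for every real $\theta$; hence I may compare the coefficients of the monomial $\theta_1^{n_1}\cdots\theta_N^{n_N}$ on the two sides.

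The comparison is then a short multinomial computation. Expanding the left side, the coefficient of $\theta_1^{n_1}\cdots\theta_N^{n_N}$ equals $\binom{M}{n_1,\dots,n_N}\,\EE_x\bigl(\prod_k (A_t(V_k))^{n_k}\bigr)$. On the right, the linear factor $W_\theta(y)=\sum_k\theta_k V_k(y)$ supplies a single $\theta_k$, so the chosen monomial is produced precisely by pairing the index $k$ with the multi-index $n-e_k$ (where $e_k$ is the $k$-th unit multi-index) in the expansion of $(A_{t-s}(W_\theta))^{M-1}$. Using that $\prod_i(A_{t-s}(V_i))^{n_i-\delta_{ik}}=\prod_i(A_{t-s}(V_i))^{n_i}/A_{t-s}(V_k)$, which is legitimate because every $n_k\ge 1$, the right-hand coefficient becomes
\[
M\sum_{k=1}^N\binom{M-1}{n-e_k}\int_E m(\diff y)\int_0^t p(s;x,y)\,V_k(y)\,\EE_y\!\Bigl(\tfrac{\prod_i(A_{t-s}(V_i))^{n_i}}{A_{t-s}(V_k)}\Bigr)\diff s.
\]
Dividing both coefficients by $\binom{M}{n_1,\dots,n_N}$ and invoking the elementary identity $M\binom{M-1}{n-e_k}=n_k\binom{M}{n_1,\dots,n_N}$ turns the prefactor into $n_k$, which is exactly the claimed formula~\eqref{eq_Kac_multi}.

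The only point demanding care is the legitimacy of the coefficient comparison, and this is mild: because each $V_k$ is bounded one has $|A_t(V_k)|\le t\sup|V_k|$, so all the expectations are finite and Fubini justifies interchanging $\EE$ with the finite multinomial sums and with the $s$- and $y$-integrals; consequently each side really is a polynomial in $\theta$ and matching coefficients is valid. I expect this to be the main (though routine) obstacle. An alternative, more hands-on route avoids the parameters altogether: write $\prod_k(A_t(V_k))^{n_k}$ as an integral of $\prod_{\ell=1}^M V_{c(\ell)}(Y_{u_\ell})$ over $[0,t]^M$, where $n_k$ of the $M$ indices $c(\ell)$ equal $k$, partition the cube according to which time variable is smallest, condition by the Markov property at that smallest time $s$, and exploit the within-index symmetry of the integrand to collect the $n_k$ equal contributions coming from the $n_k$ variables carrying the function $V_k$. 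In that route the burden shifts to the measure-theoretic bookkeeping---verifying that ties for the minimum form a Lebesgue-null set and that Fubini applies---while the polarization route keeps the analysis trivial and isolates the combinatorics in the single identity $M\binom{M-1}{n-e_k}=n_k\binom{M}{n_1,\dots,n_N}$.
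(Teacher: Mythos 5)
Your proof is correct, but it takes a genuinely different route from the paper. The paper proves the proposition directly, by the natural generalization of the univariate argument: it expands $\prod_k (A_t(V_k))^{n_k}$ as an $M$-fold time integral over $[0,t]^M$ (with $M=n_1+\dots+n_N$), uses the symmetry of the integrand to collect the $n_k$ identical contributions from the coordinates carrying $V_k$, applies the strong Markov property at the smallest time coordinate, and then identifies the transition density; this is exactly the ``alternative, more hands-on route'' you sketch at the end, including the bookkeeping about ties being Lebesgue-null that you flag as its main burden. Your primary route instead treats the univariate Kac formula (quoted in the paper for bounded measurable, hence possibly signed, $V$ --- a point your argument genuinely needs, since $W_\theta=\sum_k\theta_k V_k$ is signed) as a black box and derives the multivariate identity by polarization: both sides of the univariate formula applied to $W_\theta$ are polynomials in $\theta$ (Fubini is immediate from boundedness, as you note), and matching the coefficient of $\theta_1^{n_1}\cdots\theta_N^{n_N}$ via $M\binom{M-1}{n-e_k}=n_k\binom{M}{n_1,\dots,n_N}$ gives precisely \eqref{eq_Kac_multi}; the identification $\prod_i(A_{t-s}(V_i))^{n_i-\delta_{ik}}=\prod_i(A_{t-s}(V_i))^{n_i}/A_{t-s}(V_k)$ is the correct reading of the quotient notation and is licensed by $n_k\ge 1$. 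What your route buys is brevity and a clean separation of concerns --- no new probabilistic work, all novelty isolated in one multinomial identity --- at the cost of leaning on the signed univariate formula as an external input; the paper's route buys self-containedness and, because it redoes the probabilistic expansion explicitly, sets up the discussion in Remark~\ref{rem_kaczeros} about relaxing the hypothesis $n_k\ge 1$ (which your coefficient extraction also handles gracefully, since terms with $n_k=0$ simply never arise in the expansion of the degree-one factor $W_\theta(y)$).
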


\begin{proof}
We prove the statement for $N=2$, as the proof is analogous for any $N>2$. 
Restating the equation above with $N=2$, what we want to prove is that
\begin{align*}
\EE_x\left(A_t(V_1))^{n_1}A_t(V_2))^{n_2}\right) = &n_1 \int_E m(\diff y) \int_0^t p(s;x,y) V_1(y) \EE_y\left(A_{t-s}(V_1))^{n_1-1}A_{t-s}(V_2))^{n_2}\right) \diff s \\
&+ n_2 \int_E m(\diff y) \int_0^t p(u;x,y) V_2(y) \EE_y\left(A_{t-u}(V_1))^{n_1}A_{t-u}(V_2))^{n_2-1}\right) \diff u.
\end{align*}
Expanding the powers of $A_t(V_1)$ and $A_t(V_2)$ on the left hand side, we get 
\begin{align*}
&\EE_x\left(A_t(V_1))^{n_1}A_t(V_2))^{n_2}\right) \\
& = \EE_x\left( \int_0^t \diff s_1 \dotsm \int_0^t \diff s_{n_1} \int_0^t \diff u_1 \dotsm \int_0^t \diff u_{n_2} \,V_1(X_{s_1})\dotsm V_1(X_{s_{n_1}}) \cdot V_2(X_{u_1})\dotsm V_2(X_{u_{n_2}}) \right) \\[6pt]
& = n_1 \EE_x\left( \int_0^t \diff s_1 V_1(X_{s_1}) \int_{s_1}^t \diff s_2 \dotsm \int_{s_1}^t \diff u_{n_2} \,V_1(X_{s_2})\dotsm V_2(X_{u_{n_2}}) \right) \\[-1pt]
& \qquad + {n_2} \EE_x\left( \int_0^t \diff u_{n_2} V_2(X_{u_{n_2}}) \int_{u_{n_2}}^t \diff s_1 \dotsm \int_{u_{n_2}}^t \diff u_{{n_2}-1} \,V_1(X_{s_1})\dotsm V_2(X_{u_{{n_2}-1}}) \right) \\[6pt]
& = n_1 \int_0^t \diff s_1 \EE_x\left( V_1(X_{s_1}) \int_{0}^{t-s_1} \diff s_2 \dotsm \int_{0}^{t-s_1} \diff u_{n_2} \,V_1(X_{s_1+s_2})\dotsm V_2(X_{s_1+u_{n_2}}) \right) \\[-1pt]
& \qquad + {n_2} \int_0^t \diff u_{n_2} \EE_x\left( V_2(X_{u_{n_2}}) \int_{0}^{t-u_{n_2}} \diff s_1 \dotsm \int_{0}^{t-u_{n_2}} \diff u_{{n_2}-1} \,V_1(X_{u_{n_2}+s_1})\dotsm V_2(X_{u_{n_2}+u_{{n_2}-1}}) \right) \\[6pt]
& = n_1 \int_0^t \diff s_1 \int_E m(\diff y) p(s_1;x,y) V_1(y) \EE_y \left( A_{t-s_1}(V_1))^{n_1-1}A_{t-s_1}(V_2))^{{n_2}} \right) \\[-1pt]
&\qquad + {n_2} \int_0^t \diff u_{n_2} \int_E m(\diff y) p(u_{n_2};x,y) V_2(y) \EE_y \left( A_{t-u_{n_2}}(V_1))^{n_1}A_{t-u_{n_2}}(V_2))^{{n_2}-1} \right),
\end{align*}
where we have used the symmetry of the integrand in the second step and the strong Markov property in the final step. 
After a change of order of integration, the desired result follows. 
\end{proof}

\begin{remark}
\label{rem_kaczeros}
In the proof above, it is assumed that all values $n_1, \dotsc, n_N$ are strictly positive integers. 
However, \eqref{eq_Kac_multi} may hold  even if some (but not all) of these values are zero. 
Note that the factor $n_k$ in the terms on the right hand side ensures that any term with $n_k=0$ will not contribute, as long as the integral in that term is convergent. 
Suppose that for any starting point $X_0=y$ of the underlying diffusion for which $V_k(y) \neq 0$, it almost surely holds that $A_t(V_k) > 0$ when $t>0$, and $|A_t(V_k)| \geq |A_t(V_i)|, \forall i=1,\dotsc, N$ when $t\to 0$. 
Then the denominator $A_{t-s}(V_k)$ inside the expected value is nonzero except possibly when $s\to t$, in which case all factors in the numerator (and there is at least one) tend to zero as well, and at least as fast. 
The first condition holds, for instance, if $V_k$ is non-negative everywhere and continuous in~$y$, as the diffusion~$X$ is assumed to be regular.
Observe that one choice of functions $V_k$ that satisfies both these conditions is to take the indicator functions on the legs of a spider, i.e., $V_k = \idop_{L_k}$, which is the case of interest in this paper. 
Technically, the point $\orig$ should be excluded from the indicator functions for the conditions to hold also when $\orig$ is the starting point, but assuming that $\orig$ is not a sticky point (as we here do), that will not make a difference.
\end{remark}

\section{Recursive formula for moments}

Let $(\X_t)_{t\geq 0}$ be a diffusion spider with $R \geq 2$ legs meeting in the point~$\orig$, and let 
\[
A_t^{(i)} := \int_0^t \idop_{L_i}(X_s) \diff s
\]
be the occupation time on leg number $i$ up to time $t$. 
Note that if $X$ is self-similar, it follows for any $i$ and any fixed $t\geq 0$ that 
\[
A_t^{(i)} \overset{\mathrm{(d)}}{=} t A_1^{(i)}, 
\] 
meaning that for any such diffusion spider, we can equally well consider the occupation time up to time~1 instead of a general (fixed) time~$t$. 

In this section are presented formulas for recursively finding the moments of occupation times on the legs of a diffusion spider. 
In the first and shorter subsection we recapitulate the result for moments of a single occupation time, which is presented in our earlier paper~\cite{SalminenStenlund2021}. 
In the second subsection this result is extended to joint moments of multiple occupation times.

\subsection{Occupation time on a single leg}
\label{sec_singleleg}

As pointed out in Section~6.4 of~\cite{SalminenStenlund2021}, the occupation time on a single leg~$L_i$ of a (homogeneous) diffusion spider has the same law as the occupation time on the positive half-line of a one-dimensional skew diffusion process, with the skewness parameter given by~$\beta_i$. 
Namely, the spider is mapped onto $\RR$ so that the leg~$L_i$ corresponds to the positive half-line $[0,\infty)$, while all other legs are grouped together into a single second leg with parameter $\sum_{k\neq i}\beta_k = 1-\beta_i$, which then is taken to be the negative half-line $(-\infty, 0]$. 
When considering the occupation time on the leg~$L_i$, we can therefore equally well consider the occupation time on $[0,\infty)$ of a one-dimensional diffusion. 

The following results are shown in the previous paper~\cite{SalminenStenlund2021}, although here slightly modified to comply with the notation for diffusion spiders introduced in Section~\ref{sec_prelim}. 
In particular, note that the one-dimensional diffusion $X$ on $\RR$ with speed measure $m$, in the setting of the other paper, here really corresponds to a two-legged diffusion spider $\X$ with speed measure $\mathbf{m}$, which is why $m(dx)$ has been replaced by $\beta_i m(dx)$ in~\eqref{eq_Dk} below, in accordance with~\eqref{eq_speedscale}. 

\begin{theorem}\label{thm_singlerec}
For $n \geq1$, 
\begin{align} 
\label{eq_Laplacerec1dim}
\nonumber
\LL_t\Bigg\{ \EE_\orig\left((A_t^{(i)})^{n}\right) \Bigg\}(\lambda) = \sum_{k=1}^{n}& \frac{n! D_k^{(i)}(\lambda)}{(n-k)! \lambda^k} \LL_t\Bigg\{ \EE_\orig\left( (A_t^{(i)})^{n-k} \right) \Bigg\}(\lambda)\\ 
&+ \frac{n!}{\lambda^{n-1}} \LL_t\Bigg\{ \EE_\orig(A_t^{(i)}) \Bigg\}(\lambda)
- \frac{n!}{\lambda^{n+1}} \sum_{k=1}^{n} D_k^{(j)}(\lambda),
\end{align}
where $\LL_t$ denotes the Laplace transform with respect to $t$ of the function in curly brackets, $\lambda$ is the Laplace parameter, and  
\begin{equation}\label{eq_Dk}
D_k^{(i)}(\lambda) := \frac{\lambda^k}{(k-1)!} \int_0^\infty \mathbf{g}_\lambda(\orig,(y,i)) \EE_{(y,i)}(H_\orig^{k-1} \ee^{-\lambda H_\orig}) \beta_i\,m(\diff y). 
\end{equation}
Furthermore, if $X$ is self-similar, then  for any $\lambda>0$, 
\begin{equation} \label{eq_mom1dim}
\EE_\orig\left((A_1^{(i)})^{n}\right) = \EE_\orig\left(A_1^{(i)}\right) - \sum_{k=1}^{n} D_k^{(i)}(\lambda) \left( 1 - \EE_\orig\left( (A_1^{(i)})^{n-k} \right) \right), 
\end{equation}
and, in particular, $D_k^{(i)}(\lambda)$ does not depend on $\lambda$ for any $i=1,\dotsc,R$ and $k=1,2,\dotsc$.  
\end{theorem}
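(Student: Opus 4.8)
The plan is to apply the Kac moment formula (Proposition~\ref{prop_Kac} in the case $N=1$) directly on the spider, with $V=\idop_{L_i}$, speed measure $\mathbf{m}$, and starting point $\orig$, and then to pass to the Laplace transform in $t$. Writing $F_n(\lambda):=\LL_t\{\EE_\orig((A_t^{(i)})^{n})\}(\lambda)$, the inner $s$-integral in Kac's formula is a convolution in $t$, so under $\LL_t$ it factors, and the time-kernel $\int_0^\infty\ee^{-\lambda s}p(s;\orig,(y,i))\diff s$ collapses to the resolvent density $\mathbf{g}_\lambda(\orig,(y,i))$ of Theorem~\ref{Thrm:GreenKernel1}. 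Since $\idop_{L_i}$ restricts the space integral to $L_i$ and $\mathbf{m}(\diff y,i)=\beta_i\,m(\diff y)$, this yields
\[
F_n(\lambda)=n\int_0^\infty \mathbf{g}_\lambda(\orig,(y,i))\,\LL_t\bigl\{\EE_{(y,i)}\bigl((A_t^{(i)})^{n-1}\bigr)\bigr\}(\lambda)\,\beta_i\,m(\diff y).
\]

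The integrand is evaluated at the off-origin start $(y,i)$, so the next step is to route it back through $\orig$ by the strong Markov property at $H_\orig$. On $\{t<H_\orig\}$ the spider has not yet left $L_i$, hence $A_t^{(i)}=t$ there; on $\{t\ge H_\orig\}$ the process restarts at $\orig$ and $A_t^{(i)}=H_\orig+(A_{t-H_\orig}^{(i)})\circ\theta_{H_\orig}$. Expanding $(H_\orig+\,\cdot\,)^{n-1}$ by the binomial theorem and taking $\LL_t$ turns each resulting time-convolution into a product: the $\{t\ge H_\orig\}$ part contributes $\sum_{l=0}^{n-1}\binom{n-1}{l}\EE_{(y,i)}(H_\orig^{l}\ee^{-\lambda H_\orig})\,F_{n-1-l}(\lambda)$, while the $\{t<H_\orig\}$ part is $\LL_t\{t^{n-1}\PP_{(y,i)}(H_\orig>t)\}(\lambda)$, which by Fubini and the lower incomplete gamma identity equals $\frac{(n-1)!}{\lambda^{n}}\bigl(1-\sum_{l=0}^{n-1}\frac{\lambda^{l}}{l!}\EE_{(y,i)}(H_\orig^{l}\ee^{-\lambda H_\orig})\bigr)$. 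Substituting these, and using $\mathbf{g}_\lambda(\orig,(y,i))=g_\lambda(0,y)$ from Corollary~\ref{COR1} together with $\EE_{(y,i)}(\ee^{-\lambda H_\orig})=\varphi(y)$ from~\eqref{CT}, every $y$-integral collapses to the coefficient in~\eqref{eq_Dk} via $\int_0^\infty \mathbf{g}_\lambda(\orig,(y,i))\EE_{(y,i)}(H_\orig^{k-1}\ee^{-\lambda H_\orig})\beta_i\,m(\diff y)=\frac{(k-1)!}{\lambda^{k}}D_k^{(i)}(\lambda)$. Reindexing $k=l+1$ and collecting terms—the binomial part giving the first sum, the constant-$1$ part giving the $-\,n!\,\lambda^{-(n+1)}\sum_k D_k^{(i)}(\lambda)$ correction, and the remaining piece giving the $n!\,\lambda^{-(n-1)}\LL_t\{\EE_\orig(A_t^{(i)})\}$ term—produces exactly~\eqref{eq_Laplacerec1dim}.

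For the self-similar case, substitute $A_t^{(i)}\overset{(d)}{=}tA_1^{(i)}$, so that $\EE_\orig((A_t^{(i)})^{n})=t^{n}\mu_n$ with $\mu_n:=\EE_\orig((A_1^{(i)})^{n})$, whence $F_n(\lambda)=\mu_n\,n!/\lambda^{n+1}$. Inserting this into~\eqref{eq_Laplacerec1dim}, every term carries the common factor $n!/\lambda^{n+1}$; cancelling it strips away all $\lambda$-dependence of the moments and leaves precisely the algebraic recursion~\eqref{eq_mom1dim}. The final claim, that $D_k^{(i)}(\lambda)$ is actually independent of $\lambda$, then follows by induction on $n$: in~\eqref{eq_mom1dim} the $k=n$ summand vanishes because $\mu_0=1$, so the $n$-th equation reads $\mu_n=\mu_1-\sum_{k=1}^{n-1}D_k^{(i)}(\lambda)\bigl(1-\mu_{n-k}\bigr)$; solving for its top term expresses $D_{n-1}^{(i)}(\lambda)$ through the $\lambda$-free moments $\mu_1,\dots,\mu_n$ and the lower $D_k^{(i)}(\lambda)$ (already $\lambda$-free by induction), and since $\mu_1=\EE_\orig(A_1^{(i)})<1$ when $R\ge 2$, the division is legitimate and each $D_k^{(i)}(\lambda)$ is constant in $\lambda$.

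The main obstacle is the second step: one must justify interchanging the expectation, the binomial sum and the Laplace integral, and verify the incomplete gamma evaluation of $\LL_t\{t^{n-1}\PP_{(y,i)}(H_\orig>t)\}$, all while controlling convergence of the $y$-integrals. Here the explicit continuous form of $\mathbf{g}_\lambda$ from Theorem~\ref{Thrm:GreenKernel1} and Corollary~\ref{COR1}, together with the bounded decreasing factor $\EE_{(y,i)}(\ee^{-\lambda H_\orig})=\varphi(y)$ from~\eqref{CT}, supply the integrability needed to apply Fubini and to recognize the $D_k^{(i)}$; the rest is bookkeeping.
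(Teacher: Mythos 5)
Your argument is correct, and it assembles the same core ingredients as the paper---Kac's moment formula, the strong Markov decomposition at $H_\orig$, Laplace transforms, and the explicit Green function---but in a genuinely different order. The paper's proof (by reference to Theorem~2 of \cite{SalminenStenlund2021}, and executed explicitly in the proof of Theorem~\ref{thm_momrec}) starts the process at an interior point $(x,i)$, applies Kac's formula there, substitutes the Markov decomposition on \emph{both} sides, and only then lets $x\to 0$; this requires the limit $\EE_{(x,i)}(H_\orig^k \ee^{-\lambda H_\orig})\to \idop_{\{k=0\}}$ and the continuity of $\mathbf{g}_\lambda$ at the vertex (Corollary~\ref{COR1}) to move the limit inside the $y$-integrals. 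You instead apply Kac's formula with starting point $\orig$ itself and decompose only the integrand $\LL_t\{\EE_{(y,i)}((A_t^{(i)})^{n-1})\}$; this eliminates the limiting procedure altogether, at the cost of invoking Kac at the vertex---which is legitimate, since the spider's transition density with respect to $\mathbf{m}$ and the strong Markov property hold there (and, if one prefers, your first display is exactly what the paper's $x\to0$ step yields before any substitution). Your handling of the pre-hitting contribution via the incomplete gamma identity is precisely what generates the terms peculiar to the single-leg case (cf.\ Remark~\ref{rem_r1}), and the needed identification $\int_0^\infty \mathbf{g}_\lambda(\orig,(y,i))\,\beta_i\,m(\diff y)=\lambda\,\LL_t\{\EE_\orig(A_t^{(i)})\}(\lambda)$ is just the $n=1$ instance of your own Kac display, so nothing is missing there. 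Your induction for the $\lambda$-independence of $D_k^{(i)}(\lambda)$---the $k=n$ summand of \eqref{eq_mom1dim} vanishes, solve for the top coefficient, divide by $1-\EE_\orig(A_1^{(i)})>0$---is the same argument the paper sketches in its remark, here spelled out.

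One cosmetic slip: in your ``collecting terms'' sentence the attributions are swapped. The constant-$1$ part of the incomplete-gamma expression, integrated against $\mathbf{g}_\lambda(\orig,(y,i))\,\beta_i\,m(\diff y)$, is what produces the term $\frac{n!}{\lambda^{n-1}}\LL_t\{\EE_\orig(A_t^{(i)})\}(\lambda)$, while the subtracted sum $\sum_{l=0}^{n-1}\frac{\lambda^l}{l!}\EE_{(y,i)}(H_\orig^l\ee^{-\lambda H_\orig})$ produces the correction $-\frac{n!}{\lambda^{n+1}}\sum_{k=1}^n D_k^{(i)}(\lambda)$. The computation you describe forces the right outcome either way (and also confirms that the superscript $j$ in the last term of \eqref{eq_Laplacerec1dim} should be read as $i$), so this is a labelling error, not a mathematical one.
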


The proof follows the lines of the proof of Theorem~2 in \cite{SalminenStenlund2021} with some minor notational changes, as indicated above. 

\begin{remark}
Note the following: 
\begin{enumerate}
\item There is a sign change in the factor $D_k^{(i)}(\lambda)$ as defined in~\eqref{eq_Dk} compared to~\cite{SalminenStenlund2021}. 
This is a deliberate change that---in hindsight---could have been used in the cited paper as well.
\item The variable $\lambda$ is not at all present on the left hand side of \eqref{eq_mom1dim}, and (using induction) we can conclude that the factors $D_k^{(i)}(\lambda)$ cannot depend on $\lambda$ either. 
Thus, the value $\lambda >0$ can be chosen arbitrarily. 
As a side note, this is the reason why a factor $\lambda^k$ is included in the expression for $D_k^{(i)}(\lambda)$ in~\eqref{eq_Dk}. 
\end{enumerate}
\end{remark}

\subsection{Joint moments}
\label{sec_joint}

The result in the previous section (from~\cite{SalminenStenlund2021}) is here extended to a recursive formula for the Laplace transforms of the joint moments of the occupation times on multiple legs of a diffusion spider. 
For self-similar spiders we have a recursive formula directly for the joint moments, as in the case of the occupation time on one leg, cf.~\eqref{eq_mom1dim} in Theorem~\ref{thm_singlerec}. 

\begin{theorem}\label{thm_momrec}
For $r\in\{2,\dotsc,R\}$ and $n_1,\dotsc,n_r \geq1$, 
\begin{equation} \label{eq_Laplacerec}
\LL_t\Bigg\{ \EE_\orig\left(\prod_{i=1}^r (A_t^{(i)})^{n_i}\right) \Bigg\}(\lambda) = \sum_{i=1}^r \sum_{k=1}^{n_i} \frac{{n_i}! D_k^{(i)}(\lambda)}{(n_i-k)! \lambda^k} \LL_t\Bigg\{ \EE_\orig\left( \frac{\prod_{j=1}^r (A_t^{(j)})^{n_j} }{(A_t^{(i)})^{k}} \right) \Bigg\}(\lambda),
\end{equation}
where $D_k^{(i)}$ is defined in~\eqref{eq_Dk}.   
If $X$ is self-similar, then  for any $\lambda>0$, 
\begin{equation} \label{eq_selfrec}
\EE_\orig\left(\prod_{i=1}^r (A_1^{(i)})^{n_i}\right) = \sum_{i=1}^r \sum_{k=1}^{n_i} \dfrac{\binom{n_i}{k}}{\binom{n_1+\dotsc+n_r}{k}} D_k^{(i)}(\lambda) \EE_\orig\left( \frac{\prod_{j=1}^r (A_1^{(j)})^{n_j} }{(A_1^{(i)})^{k}} \right). 
\end{equation}
\end{theorem}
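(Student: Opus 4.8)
The plan is to apply the extended Kac moment formula from Proposition~\ref{prop_Kac} with the leg indicators $V_i=\idop_{L_i}$, $i=1,\dots,r$, and then take the Laplace transform in $t$. Since $\idop_{L_i}$ restricts the state-space integral to leg $L_i$, where $\mathbf{m}(\diff y,i)=\beta_i\,m(\diff y)$, Proposition~\ref{prop_Kac} applied at the starting point $\orig$ gives
\begin{equation*}
\EE_\orig\left(\prod_{i=1}^r (A_t^{(i)})^{n_i}\right)=\sum_{i=1}^r n_i\int_0^\infty \beta_i\,m(\diff y)\int_0^t p(s;\orig,(y,i))\,\EE_{(y,i)}\!\left(\frac{\prod_{j=1}^r (A_{t-s}^{(j)})^{n_j}}{A_{t-s}^{(i)}}\right)\diff s.
\end{equation*}
The inner time-integral is a convolution in $t$, so under $\LL_t$ it factorizes; using $\LL_t\{p(t;\orig,(y,i))\}(\lambda)=\mathbf{g}_\lambda(\orig,(y,i))$ from the definition of the Green kernel, the right-hand side becomes a $y$-integral of $\mathbf{g}_\lambda(\orig,(y,i))$ against the Laplace transform of the $(y,i)$-expectation.

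The heart of the argument is to rewrite the expectation started from $(y,i)$ in terms of one started from $\orig$, via the strong Markov property at $H_\orig$. Starting on leg $L_i$ at distance $y>0$, the process stays on $L_i$ until it hits $\orig$, so $A_{H_\orig}^{(i)}=H_\orig$ and $A_{H_\orig}^{(j)}=0$ for $j\neq i$. Because $r\geq 2$ and every $n_j\geq 1$, the integrand $\prod_j(A_t^{(j)})^{n_j}/A_t^{(i)}$ vanishes on $\{t\leq H_\orig\}$; on $\{t>H_\orig\}$ the strong Markov property lets me replace the post-$H_\orig$ occupation times by those of a copy $\widetilde{A}$ started at $\orig$, and cancelling the single power of $A_t^{(i)}$ leaves $(H_\orig+\widetilde{A}^{(i)})^{n_i-1}\prod_{j\neq i}(\widetilde{A}^{(j)})^{n_j}$. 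Expanding the binomial $(H_\orig+\widetilde{A}^{(i)})^{n_i-1}=\sum_{k=1}^{n_i}\binom{n_i-1}{k-1}H_\orig^{\,k-1}(\widetilde{A}^{(i)})^{n_i-k}$ produces precisely the reduced products $\prod_j(\widetilde{A}^{(j)})^{n_j}/(\widetilde{A}^{(i)})^{k}$ weighted by $H_\orig^{\,k-1}$.

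Now I would take the Laplace transform of this decomposition. Because $H_\orig$ is $\mathcal{F}_{H_\orig}$-measurable and the shifted occupation times start afresh at $\orig$, a Fubini argument (justified since $k\leq n_i$ makes the reduced moment bounded by $t^{\,n_1+\dots+n_r-k}$, using $0\leq A_t^{(i)}\leq t$) turns the $t$-convolution into a product: each term contributes $\EE_{(y,i)}(H_\orig^{\,k-1}\ee^{-\lambda H_\orig})$ times $\LL_t\{\EE_\orig(\prod_j(A_t^{(j)})^{n_j}/(A_t^{(i)})^{k})\}(\lambda)$. Integrating the factor $\EE_{(y,i)}(H_\orig^{\,k-1}\ee^{-\lambda H_\orig})$ against $\beta_i\,m(\diff y)\,\mathbf{g}_\lambda(\orig,(y,i))$ is, by the definition~\eqref{eq_Dk}, exactly $\tfrac{(k-1)!}{\lambda^k}D_k^{(i)}(\lambda)$. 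Collecting the constants, the coefficient becomes $n_i\binom{n_i-1}{k-1}(k-1)!=n_i!/(n_i-k)!$, which yields~\eqref{eq_Laplacerec}.

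For the self-similar case, the self-similarity of $X$ yields, by the same change-of-variables argument as for a single leg but applied simultaneously, the joint scaling $(A_t^{(1)},\dots,A_t^{(r)})\overset{(d)}{=}t\,(A_1^{(1)},\dots,A_1^{(r)})$, so each reduced moment satisfies $\EE_\orig(\prod_j(A_t^{(j)})^{n_j}/(A_t^{(i)})^{k})=t^{N-k}\,\EE_\orig(\prod_j(A_1^{(j)})^{n_j}/(A_1^{(i)})^{k})$ with $N=n_1+\dots+n_r$. Using $\LL_t\{t^{N-k}\}(\lambda)=(N-k)!/\lambda^{N-k+1}$ and substituting into~\eqref{eq_Laplacerec}, the common factor $N!/\lambda^{N+1}$ cancels from both sides and the remaining constant simplifies to $\tfrac{n_i!}{(n_i-k)!}\tfrac{(N-k)!}{N!}=\binom{n_i}{k}/\binom{N}{k}$, giving~\eqref{eq_selfrec}. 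I expect the main obstacle to be the strong Markov decomposition together with the binomial bookkeeping, i.e.\ correctly identifying the $\idop_{\{t>H_\orig\}}$ restriction (which is where $r\geq 2$ is essential) and justifying the Fubini interchange that extracts $\EE_{(y,i)}(H_\orig^{\,k-1}\ee^{-\lambda H_\orig})$; the remaining algebra and the self-similar reduction are routine once that identity is in place.
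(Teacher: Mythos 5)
Your proof is correct, and it arrives at \eqref{eq_Laplacerec} by a genuinely different arrangement of the same ingredients. The paper never applies Kac's formula at the vertex: it applies Proposition~\ref{prop_Kac} at a generic interior point $(x,1)$, uses the strong Markov decomposition at $H_\orig$ to express \emph{both} the left-hand side $\LL_t\{\EE_{(x,1)}(\cdots)\}$ \emph{and} the inner $(y,i)$-expectations in terms of $\orig$-started quantities, and only then lets $x\to 0$; that final limit is what requires the continuity of the Green kernel up to the vertex (Corollary~\ref{COR1}) and the fact that $\EE_{(x,1)}(H_\orig^k\ee^{-\lambda H_\orig})\to \idop_{\{k=0\}}$ as $x\to 0$ (Lemma~1 of~\cite{SalminenStenlund2021}). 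You instead apply Proposition~\ref{prop_Kac} directly with starting point $\orig$, so the left-hand side is already the target quantity, and the strong Markov decomposition is needed only for the reduced products started at $(y,i)$ --- correctly identifying that this is where $r\geq 2$ forces the $\{t\leq H_\orig\}$ contribution to vanish (precisely the point made in Remark~\ref{rem_r1}), and your binomial bookkeeping $n_i\binom{n_i-1}{k-1}(k-1)!=n_i!/(n_i-k)!$ and the identification of the $y$-integral with $(k-1)!\,D_k^{(i)}(\lambda)/\lambda^k$ via \eqref{eq_Dk} are both exact. What each route buys: yours is shorter and eliminates the limit-interchange machinery altogether; the paper's detour through $(x,1)$ keeps every application of the Kac formula away from the vertex $\orig$, where the spider is not locally an interval, so it sidesteps any question of whether Proposition~\ref{prop_Kac} (stated for diffusions on an interval) extends to that point. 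Since the proof of the Kac formula uses only the Markov property and the existence of a transition density --- both available at $\orig$, which is assumed non-sticky --- your direct application is legitimate, but it would be worth a sentence acknowledging that you are invoking the spider version of Proposition~\ref{prop_Kac} at the vertex itself. The self-similar reduction, including the joint scaling $(A_t^{(1)},\dotsc,A_t^{(r)})\overset{(d)}{=}t(A_1^{(1)},\dotsc,A_1^{(r)})$ and the simplification of the constants to $\binom{n_i}{k}/\binom{N}{k}$, matches the paper's argument exactly.
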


\begin{proof} 
We first remark that the expression on the right hand side of \eqref{eq_Laplacerec} is well defined since $\PP_\orig\left(A_t^{(i)}>0\right)=1$ for all $t>0$ and $i=1,2,\dotsc,R$. 
The procedure below closely follows the proof of Theorem~2 in~\cite{SalminenStenlund2021}, with the main difference being that instead of the original Kac's moment formula we use the generalized version given in Proposition~\ref{prop_Kac}. 
Proving the result for $r=2$ should be sufficient, since the method is the same for any higher values of~$r$ (you only need to include more terms of similar form). 

With $r=2$, the claimed identity \eqref{eq_Laplacerec} can be written
\begin{align} \label{eq_Laplacerec_N2}
\LL_t\biggl\{ \EE_\orig\left((A_t^{(1)})^{{n_1}} (A_t^{(2)})^{{n_2}} \right) \biggr\}(\lambda) &= \sum_{k=1}^{{n_1}} \frac{{{n_1}}! D_k^{(1)}(\lambda)}{({n_1}-k)! \lambda^k} \LL_t\biggl\{ \EE_\orig\left( (A_t^{(1)})^{{n_1}-k} (A_t^{(2)})^{{n_2}} \right) \biggr\}(\lambda) \nonumber \\
&\quad + \sum_{k=1}^{{n_2}} \frac{{{n_2}}! D_k^{(2)}(\lambda)}{({n_2}-k)! \lambda^k} \LL_t\biggl\{ \EE_\orig\left( (A_t^{(1)})^{{n_1}} (A_t^{(2)})^{{n_2}-k} \right) \biggr\}(\lambda). 
\end{align}
Note that the numbering of the diffusion spider legs is arbitrary, so any two legs could be considered, even though they are here numbered 1 and 2. 

Assume first that the diffusion starts on one of the legs at a distance $x$ from the origin. 
Without loss of generality, we will write that it starts on the first leg $L_1$. 
Before the diffusion hits~$\orig$ for the first time, the occupation time on the starting leg $L_1$ equals the entire elapsed time, while the occupation time on any other leg stays zero. 
Thus, applying the strong Markov property at the first hitting time of~$\orig$, we have
\[
A^{(1)}_t = 
\begin{cases}
H_\orig + A^{(1)}_{t-H_\orig} \circ \theta_{H_\orig}, & H_\orig < t, \\
t, & H_\orig \geq t, 
\end{cases}
\]
while for any other leg, 
\[
A^{(k)}_t = 
\begin{cases}
A^{(k)}_{t-H_\orig} \circ \theta_{H_\orig}, & H_\orig < t, \\
0, & H_\orig \geq t, 
\end{cases}
\quad (k>1). 
\]
where $\theta_t$ is the usual shift operator. 
From this we obtain, for any ${n_1},{n_2}\geq 1$, 
\begin{equation}\label{eq_A1n1A2n2}
(A_t^{(1)})^{{n_1}} (A_t^{(2)})^{{n_2}} = 
\begin{cases}
\sum_{k=0}^{n_1} \binom{{n_1}}{k} H_\orig^k (A^{(1)}_{t-H_\orig}\circ \theta_{H_\orig})^{{n_1}-k} (A^{(2)}_{t-H_\orig}\circ \theta_{H_\orig})^{{n_2}}, & H_\orig<t, \\
0, & H_\orig \geq t, 
\end{cases} 
\end{equation}
and
\[
\EE_{(x,1)} \left( (A_t^{(1)})^{{n_1}} (A_t^{(2)})^{{n_2}} \right) = \sum_{k=0}^{n_1} \binom{{n_1}}{k} \int_0^t \EE_\orig \left( (A^{(1)}_{t-s})^{{n_1}-k} (A^{(2)}_{t-s})^{{n_2}} \right) s^k f(x,1;s) \diff s, 
\]
where $f(x,1;t)$ denotes the $\PP_{(x,1)}$-density of $H_\orig$. 
Taking the Laplace transform with respect to $t$, we first recall that
\[
\LL_t \{ f(x,1;t) \}(\lambda) = \int_0^\infty \ee^{-\lambda t} \PP_x(H_\orig \in \diff t) = \EE_{(x,1)}\bigl(\ee^{-\lambda H_\orig}\bigr), 
\]
and 
\[
\LL_t \{ t^k f(x,1;t) \}(\lambda) = (-1)^k \frac{\diff^k}{\diff \lambda^k} \LL_t \{ f(x,1;t) \}(\lambda) = \EE_{(x,1)}\bigl(H_\orig^k \ee^{-\lambda H_\orig}\bigr), 
\]
so that we get, using the formula for the Laplace transform of a convolution, 
\begin{equation}\label{eq_Laplx-0}
\LL_t \biggl\{ \EE_{(x,1)} \left( (A_t^{(1)})^{{n_1}} (A_t^{(2)})^{{n_2}} \right) \biggr\}(\lambda) = \sum_{k=0}^{n_1} \binom{{n_1}}{k} \EE_{(x,1)}\bigl(H_\orig^k \ee^{-\lambda H_\orig}\bigr) \LL_t \biggl\{ \EE_\orig \left( (A^{(1)}_{t})^{{n_1}-k} (A^{(2)}_{t})^{{n_2}}\right)  \biggr\}(\lambda). 
\end{equation}
On the right hand side of the equation, the starting point $(x,1)$ is only part of the expression involving the hitting time $H_\orig$, while the Laplace transform of the joint moments instead has the starting point~$\orig$. 
The equation above is now used together with Kac's moment formula to obtain a recursive formula for (the Laplace transform of) the joint moments of the occupation times on the different legs. 
Proposition~\ref{prop_Kac} yields
\begin{align*}
\LL_t \biggl\{ \EE_{(x,1)} \left( (A_t^{(1)})^{{n_1}} (A_t^{(2)})^{{n_2}} \right) \biggr\}(\lambda) &= {n_1} \int_0^\infty \mathbf{g}_\lambda((x,1),(y,1)) \LL_t \biggl\{ \EE_{(y,1)} \left( (A_t^{(1)})^{{n_1}-1} (A_t^{(2)})^{{n_2}} \right) \biggr\}(\lambda) \, \beta_1\,m(\diff y)  \\
&+ {n_2} \int_0^\infty \mathbf{g}_\lambda((x,1),(y,2)) \LL_t \biggl\{ \EE_{(y,2)} \left( (A_t^{(1)})^{{n_1}} (A_t^{(2)})^{{n_2}-1} \right) \biggr\}(\lambda) \,\beta_2\, m(\diff y), 
\end{align*}
and inserting the expression in~\eqref{eq_Laplx-0} on both sides gives
\begin{align*}
&\sum_{k=0}^{{n_1}} \binom{{n_1}}{k} \EE_{(x,1)}\bigl(H_\orig^k \ee^{-\lambda H_\orig}\bigr) \LL_t \biggl\{ \EE_\orig \left( (A^{(1)}_{t})^{{n_1}-k} (A^{(2)}_{t})^{{n_2}} \right) \biggr\}(\lambda) \\
&= {n_1} \sum_{k=0}^{{{n_1}}-1} \binom{{n_1}-1}{k} \LL_t \biggl\{ \EE_\orig \left( (A^{(1)}_{t})^{{n_1}-k-1} (A^{(2)}_{t})^{{n_2}} \right) \biggr\}(\lambda) \int_0^\infty \mathbf{g}_\lambda((x,1),(y,1)) \EE_{(y,1)}\bigl(H_\orig^k \ee^{-\lambda H_\orig}\bigr) \, \beta_1\,m(\diff y)  \\
&+ {n_2} \sum_{k=0}^{{n_2}-1} \binom{{n_2}-1}{k} \LL_t \biggl\{ \EE_\orig \left( (A^{(1)}_{t})^{{n_1}} (A^{(2)}_{t})^{{n_2}-k-1} \right) \biggr\}(\lambda) \int_0^\infty \mathbf{g}_\lambda((x,1),(y,2)) \EE_{(y,2)}\bigl(H_\orig^k \ee^{-\lambda H_\orig}\bigr) \,\beta_2\, m(\diff y). 
\end{align*}
Note that the second integral is taken over points $(y,2)$ that lie on the second leg~$L_2$, which is why in that case the expression in~\eqref{eq_Laplx-0} is inserted with the roles of $n_1$ and $n_2$ interchanged. 
We now let $x\to 0$ on both sides of the equation above. For this, recall from \cite{SalminenStenlund2021} (Lemma~1) that 
\[
\lim_{x\to 0} \EE_{(x,1)}\bigl(H_\orig^k \ee^{-\lambda H_\orig}\bigr) = 
\begin{cases}
1, & k=0, \\
0, & k\geq 1,
\end{cases} 
\]
and by Corollary \ref{COR1} we may take the limit inside the integrals to obtain
\begin{align*}
&\LL_t \biggl\{ \EE_\orig \left( (A^{(1)}_{t})^{{n_1}} (A^{(2)}_{t})^{{n_2}} \right) \biggr\}(\lambda) \\
&= \sum_{k=1}^{{n_1}} \frac{{n_1}!}{({n_1}-k)!} \LL_t \biggl\{ \EE_\orig \left( (A^{(1)}_{t})^{{n_1}-k} (A^{(2)}_{t})^{{n_2}} \right) \biggr\}(\lambda) \frac{1}{(k-1)!} \int_0^\infty \mathbf{g}_\lambda(\orig,(y,1)) \EE_{(y,1)}\bigl(H_\orig^{k-1} \ee^{-\lambda H_\orig}\bigr) \,\beta_1\, m(\diff y) \\
&+ \sum_{k=1}^{{n_2}} \frac{{n_2}!}{({n_2}-k)!} \LL_t \biggl\{ \EE_\orig \left( (A^{(1)}_{t})^{{n_1}} (A^{(2)}_{t})^{{n_2}-k} \right) \biggr\}(\lambda) \frac{1}{(k-1)!} \int_0^\infty \mathbf{g}_\lambda(\orig,(y,2)) \EE_{(y,2)}\bigl(H_\orig^{k-1} \ee^{-\lambda H_\orig}\bigr) \,\beta_2\, m(\diff y), 
\end{align*}
where also the summation index is changed. 
This is equivalent to~\eqref{eq_Laplacerec_N2} when introducing $D_k^{(i)}(\lambda)$ as defined in \eqref{eq_Dk}, and this proves the first part of the theorem. 

For a self-similar spider,  
\[
(A^{(1)}_{t})^{{n_1}} (A^{(2)}_{t})^{{n_2}} \overset{\mathrm{(d)}}{=} t^{{n_1}+{n_2}} (A^{(1)}_{1})^{{n_1}} (A^{(2)}_{1})^{{n_2}}, 
\]
and, in this case,
\[
\LL_t \biggl\{ \EE_\orig \left( (A^{(1)}_{t})^{{n_1}} (A^{(2)}_{t})^{{n_2}} \right) \biggr\}(\lambda) = \frac{({n_1}+{n_2})!}{\lambda^{{n_1}+{n_2}+1}} \EE_\orig \left( (A^{(1)}_{1})^{{n_1}} (A^{(2)}_{1})^{{n_2}} \right).
\]
Hence, for self-similar spiders, the expression in~\eqref{eq_Laplacerec_N2} simplifies to 
\begin{align}
\label{LA}
\nonumber
\EE_\orig \left( (A^{(1)}_{1})^{{n_1}} (A^{(2)}_{1})^{{n_2}} \right) &= \sum_{k=1}^{{n_1}} \frac{\binom{{n_1}}{k}}{\binom{{n_1}+{n_2}}{k}} D_k^{(1)}(\lambda) \EE_\orig \left( (A^{(1)}_{1})^{{n_1}-k} (A^{(2)}_{1})^{{n_2}} \right)\\
& + \sum_{k=1}^{{n_2}} \frac{\binom{{n_2}}{k}}{\binom{{n_1}+{n_2}}{k}} D_k^{(2)}(\lambda) \EE_\orig \left( (A^{(1)}_{1})^{{n_1}} (A^{(2)}_{1})^{{n_2}-k} \right). 
\end{align}
This proves the second part of the theorem for $r=2$, and the proof is easily extended for higher~$r$. 
\end{proof}

The result in Theorem~\ref{thm_momrec} tells us that if we know the Green kernel of the diffusion spider $\X$, we can recursively compute any joint moments of the occupation times on a number of legs. 
Recall also from~\eqref{CT} that for $y>0$
\[
\EE_{(y,i)}\bigl(H_\orig^{k} \ee^{-\lambda H_\orig}\bigr) = 
\EE_{y}\bigl(H_0^{k} \ee^{-\lambda H_0}\bigr)
= (-1)^{k} \frac{\diff^k}{\diff \lambda^k} \frac{\varphi_{\lambda}(y)}{\varphi_{\lambda}(0)},
\]
i.e., we have all ingredients needed for computing the factors $D^{(i)}_k$ using the integral expression in~\eqref{eq_Dk}. 

\begin{remark}
\label{rem_r1}
Despite the similarity, Theorem~\ref{thm_singlerec} does not follow by letting $r=1$ in Theorem~\ref{thm_momrec}. 
On the right hand sides of equations \eqref{eq_Laplacerec1dim} and~\eqref{eq_mom1dim} are not only the parts corresponding to \eqref{eq_Laplacerec} and~\eqref{eq_selfrec}, respectively, with $r=1$, but also some additional terms. 
This difference seems to stem from the fact that up to the first hitting time of~$\orig$, the occupation time on the starting leg is equal to the time elapsed (and, hence, positive), while the occupation time on any other leg is zero. 
Any product of occupation times on more than one leg is, therefore, also zero up to time~$H_\orig$, as can be seen in~\eqref{eq_A1n1A2n2}, and even though we let the starting point tend to~$\orig$ when deriving the aforementioned theorems, there remains still a component which is nonzero in the single leg case but zero for the joint moments. 
With this in mind, Theorem~\ref{thm_momrec} should be seen not as a replacement of Theorem~\ref{thm_singlerec} but as a complement to it. 
\end{remark}

\section{Moment generating function}

The generalized version of Kac's moment formula in Proposition~\ref{prop_Kac} can also be used to derive a moment generating function of the occupation times on the legs of a diffusion spider up to an exponential time~$T$.  

\begin{theorem}\label{thm_momgenspider}
Let $T$ be exponentially distributed with mean $1/\lambda$, $\lambda>0$, and independent of $\X$. 
Then, for any $z_1,\dotsc,z_R \geq 0$, 

\begin{equation} 
\label{eq_momgenspider}
\EE_\orig\Biggl(\mathrm{exp}\biggl(- \sum_{i=1}^R z_i A_T^{(i)}\biggr)\Biggr) 
= \frac{\displaystyle 1-\sum_{j=1}^R \frac{\lambda z_j}{\lambda + z_j} \int_0^\infty \mathbf{g}_\lambda(\orig,(y,j)) \left(1 - \EE_{(y,j)}(\ee^{-(\lambda+z_j) H_\orig}) \right)\beta_j\, m(\diff y)}{\displaystyle 1+ \sum_{j=1}^R z_j \int_0^\infty \mathbf{g}_\lambda(\orig,(y,j)) \EE_{(y,j)}(\ee^{-(\lambda+z_j) H_\orig})\beta_j\, m(\diff y)}. 
\end{equation}
\end{theorem}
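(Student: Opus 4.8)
The plan is to work with the single potential $V := \sum_{j=1}^R z_j \idop_{L_j}$ on $\Gamma$, so that $\sum_i z_i A_T^{(i)} = A_T(V)$, and to study the function $W(x,\ell) := \EE_{(x,\ell)}\bigl(\ee^{-A_T(V)}\bigr)$, the quantity of interest being $W(\orig)$. The strategy has three ingredients: (i) a resolvent (Duhamel) equation for $W$ obtained from Kac's moment formula; (ii) an explicit expression for $W(y,j)$ at interior points $y>0$ obtained from the strong Markov property; and (iii) combining these at the origin to get a scalar linear equation whose solution is the right-hand side of~\eqref{eq_momgenspider}.

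For ingredient (i), I would take Kac's moment formula (the single-potential form recalled before Proposition~\ref{prop_Kac}, applied with $E=\Gamma$, $m=\mathbf{m}$ and $p$ the transition density of $\X$; one could equally invoke Proposition~\ref{prop_Kac} with $V_k=z_k\idop_{L_k}$ and sum the multinomial), multiply by $(-1)^n/n!$ and sum over $n\geq 1$. Since $0\le A_t(V)\le (\max_j z_j)\,t$, the moments satisfy $\EE_{(x,\ell)}\bigl((A_t(V))^n\bigr)\le ((\max_j z_j)t)^n$, so the exponential series converges absolutely and the interchange of summation with the expectation and the $s$-integral is justified by dominated convergence. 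This yields the integral equation
\[
\EE_{(x,\ell)}\bigl(\ee^{-A_t(V)}\bigr) = 1 - \int_\Gamma \mathbf{m}(\diff y,j)\int_0^t p(s;(x,\ell),(y,j))\,V(y,j)\,\EE_{(y,j)}\bigl(\ee^{-A_{t-s}(V)}\bigr)\diff s.
\]
Taking $\LL_t$, noting that the $s$-integral is a convolution whose transform factorises into $\mathbf{g}_\lambda$ and $\LL_t\{\EE_{(y,j)}(\ee^{-A_t(V)})\}$, and multiplying by $\lambda$ (so that $\lambda\,\LL_t\{\EE(\ee^{-A_t(V)})\}(\lambda)=\EE(\ee^{-A_T(V)})=W$), I obtain the resolvent identity $W = 1 - \mathbf{G}_\lambda(VW)$, that is,
\[
W(x,\ell) = 1 - \sum_{j=1}^R z_j\int_0^\infty \mathbf{g}_\lambda((x,\ell),(y,j))\,W(y,j)\,\beta_j\,m(\diff y).
\]

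For ingredient (ii), fix $y>0$ and condition on $H_\orig$. On $\{t<H_\orig\}$ the spider has not left leg $j$, so $A_t(V)=z_j t$; on $\{t\ge H_\orig\}$ I split $A_t(V)=z_jH_\orig + A_{t-H_\orig}(V)\circ\theta_{H_\orig}$ and apply the strong Markov property at $H_\orig$. Integrating against $\lambda\ee^{-\lambda t}\diff t$ and using the lack of memory of $T$ (so the post-$H_\orig$ clock is again $\mathrm{Exp}(\lambda)$), together with \eqref{CT} in the form $\EE_{(y,j)}(\ee^{-(\lambda+z_j)H_\orig})=\varphi_{\lambda+z_j}(y)$, I expect to obtain
\[
W(y,j) = \frac{\lambda}{\lambda+z_j}\bigl(1-\varphi_{\lambda+z_j}(y)\bigr) + \varphi_{\lambda+z_j}(y)\,W(\orig).
\]

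Finally, for ingredient (iii), I set $x=\orig$ in the resolvent identity. Here Corollary~\ref{COR1} supplies $\mathbf{g}_\lambda(\orig,(y,j))=c_\lambda^{-1}\varphi(y)$ directly (no limiting argument needed), and inserting the expression for $W(y,j)$ from (ii) turns the right-hand side into a constant minus a constant times $W(\orig)$; solving this scalar linear equation yields precisely the ratio in~\eqref{eq_momgenspider}, the numerator collecting the $\tfrac{\lambda z_j}{\lambda+z_j}\bigl(1-\varphi_{\lambda+z_j}\bigr)$ contributions and the denominator the $z_j\varphi_{\lambda+z_j}$ contributions. The convergence and Fubini bookkeeping in (i) is harmless given the boundedness of $V$ and $z_i\ge 0$; the real crux is recognising that neither equation closes on its own---the identity at $\orig$ feeds on the interior values $W(y,j)$, which in turn are expressed through $W(\orig)$---so the two ingredients must be combined to produce a single solvable equation for $W(\orig)$.
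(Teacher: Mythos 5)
Your proposal is correct and takes essentially the same route as the paper: your ingredient (ii) is exactly the paper's intermediate identity \eqref{eq_momgendepx} (strong Markov at $H_\orig$ plus memorylessness of $T$), your resolvent identity in (i) is the paper's \eqref{eq_momgeninty}, and your ingredient (iii) is the paper's final step of combining the two at the origin and solving the scalar linear equation. The only (harmless) variations are that you use the single-potential Kac formula with $V=\sum_j z_j\idop_{L_j}$ where the paper expands the exponential multinomially and applies Proposition~\ref{prop_Kac} (your version conveniently sidesteps the zero-exponent care of Remark~\ref{rem_kaczeros}), and you evaluate at $\orig$ directly where the paper lets $x\to 0$, both justified by Corollary~\ref{COR1}.
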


\begin{proof}
If the diffusion spider starts in the point $(x,j)$, that is, on a particular leg $L_j$, then up to the time $H_\orig$ the occupation time on the leg $L_j$ is equal to the time elapsed, while the occupation time on any other leg is zero. Hence, for any $i\in\{1,\dotsc,R\}$, 
\[
A^{(i)}_t = 
\begin{cases}
H_\orig \, \idop_{\{i=j\}} + A^{(i)}_{t-H_\orig} \circ \theta_{H_\orig}, & H_\orig < t, \\
t \, \idop_{\{i=j\}}, & H_\orig \geq t. 
\end{cases}
\]
To prove the theorem, we first consider the left hand side of~\eqref{eq_momgenspider} with a general starting point~$(x,j)$ instead of~$\orig$ and split the analysis into the two events whether the diffusion spider hits~$\orig$ before the exponential time~$T$ or not. 
This gives the two parts
\begin{align*}
\EE_{(x,j)}\left(\mathrm{exp}\bigg(- \sum_{i=1}^R z_i A_T^{(i)}\bigg) ; H_\orig \geq T \right) 
&= \EE_{(x,j)}\left(\ee^{-z_j T} ; H_\orig \geq T \right) \\
&= \frac{\lambda}{\lambda + z_j} \left( 1 - \EE_{(x,j)}\bigg(\ee^{-(\lambda + z_j) H_\orig} \bigg) \right)
\end{align*}
and
\begin{align*}
\EE_{(x,j)}\left(\mathrm{exp}\bigg(- \sum_{i=1}^R z_i A_T^{(i)}\bigg) ; H_\orig < T \right) 
&= \EE_{(x,j)}\left(\ee^{-z_j H_\orig} ; H_\orig < T \right) \EE_\orig\left(\mathrm{exp}\bigg(- \sum_{i=1}^R z_i A_T^{(i)}\bigg)\right) \\
&= \EE_{(x,j)}\bigg(\ee^{-(\lambda + z_j) H_\orig} \bigg) \EE_\orig\left(\mathrm{exp}\bigg(- \sum_{i=1}^R z_i A_T^{(i)}\bigg)\right),
\end{align*}
where in the second part we have used the strong Markov property to restart the process when it first hits $\orig$, as well as the memoryless property of the exponential distribution. 
Combining both parts gives
\begin{equation}
\label{eq_momgendepx}
\EE_{(x,j)}\left(\mathrm{exp}\bigg(- \sum_{i=1}^R z_i A_T^{(i)}\bigg) \right) = \frac{\lambda}{\lambda + z_j}
+ \left(\EE_\orig\left(\mathrm{exp}\bigg(- \sum_{i=1}^R z_i A_T^{(i)}\bigg)\right) - \frac{\lambda}{\lambda + z_j}\right) \EE_{(x,j)}\bigg(\ee^{-(\lambda + z_j) H_\orig} \bigg). 
\end{equation}
The significance of this expression is that on the right hand side the dependency on the starting position $(x,j)$ is contained only in a function of the first hitting time $H_\orig$, while the moment generating function of the occupation times has the starting point~$\orig$ instead. 

As the left hand side of~\eqref{eq_momgendepx} can be written
\begin{equation}\label{eq_mgfTasLaplace}
\EE_{(x,j)}\left(\mathrm{exp}\bigg(- \sum_{i=1}^R z_i A_T^{(i)}\bigg) \right) 
= \int_0^\infty \EE_{(x,j)}\left(\mathrm{exp}\bigg(- \sum_{i=1}^R z_i A_t^{(i)}\bigg) \right) \lambda \ee^{-\lambda t} \diff t,
\end{equation}
we will, for the moment, consider the expression with a fixed time~$t$ rather than the exponential time~$T$. Expanding as a sum of joint moments, we get 
\begin{align}
\EE_{(x,j)}\left(\mathrm{exp}\bigg(- \sum_{i=1}^R z_i A_t^{(i)}\bigg)\right) 
&= \sum_{N=0}^\infty \frac{(-1)^N}{N!} \EE_{(x,j)}\left( \left(\sum_{i=1}^R z_i A_t^{(i)}\right)^N \right) \nonumber\\
&= \sum_{N=0}^\infty \frac{(-1)^N}{N!}\EE_{(x,j)}\left( \sum_{\substack{n_1, \dotsc, n_R \geq 0 \\[2pt] n_1 + \dots + n_R = N}} \! \frac{N!}{n_1! n_2! \dotsm n_R!} \prod_{i=1}^R (z_i A_t^{(i)})^{n_i} \right) \nonumber\\
&= \sum_{N=0}^\infty \sum_{\substack{n_1, \dotsc, n_R \geq 0 \\[2pt] n_1 + \dots + n_R = N}} \left( \prod_{l=1}^R \frac{(-z_l)^{n_l}}{n_l!} \right) \EE_{(x,j)}\left( \prod_{i=1}^R (A_t^{(i)})^{n_i} \right). 
\label{eq_expandmgf}
\end{align}
The generalized Kac's moment formula~\eqref{eq_Kac_multi} with the functions $V_k(x) = \idop_{L_k}(x)$ (and formulated for the spider) becomes
\begin{equation}\label{KAC1}
\EE_{(x,j)}\left(\prod_{i=1}^R (A_t^{(i)})^{n_i}\right) = \sum_{i=1}^R n_i \int_0^\infty \beta_i m(\diff y) \int_0^t p(s;(x,j),(y,i)) \EE_{(y,i)}\left(\frac{\prod_{k=1}^R (A_{t-s}^{(k)})^{n_k}}{A_{t-s}^{(i)}}\right) \diff s, 
\end{equation}
where $p(s;(x,j),(y,i))$ denotes the transition density of the spider. 
Next, equation~\eqref{KAC1} is inserted in \eqref{eq_expandmgf}, bearing in mind that by Remark~\ref{rem_kaczeros} this can be done even when some of the values $n_1,\dotsc,n_R$ are zero. 
The exception is the term for $N=0$, since at least one of the $n_i$ has to be strictly positive, so this term (which evaluates to 1) is separated from the rest of the sum. 
This yields
\begin{align*}
&\EE_{(x,j)}\left(\mathrm{exp}\bigg(- \sum_{i=1}^R z_i A_t^{(i)}\bigg)\right) \\
&\; = 1 + \sum_{N=1}^\infty \sum_{\substack{n_1, \dotsc, n_R \geq 0 \\[2pt] n_1 + \dots + n_R = N}} \left( \prod_{l=1}^R \frac{(-z_l)^{n_l}}{n_l!} \right) \sum_{i=1}^R n_i \int_0^\infty \beta_i\,m(\diff y) \int_0^t p(s;(x,j),(y,i)) \EE_{(y,i)}\left(\frac{\prod_{k=1}^R (A_{t-s}^{(k)})^{n_k}}{A_{t-s}^{(i)}}\right) \diff s \\
&\; = 1 + \sum_{i=1}^R \int_0^\infty\beta_i\, m(\diff y) \int_0^t p(s;(x,j),(y,i)) \sum_{N=0}^\infty \sum_{\substack{n_1, \dotsc, n_R \geq 0 \\[2pt] n_1 + \dots + n_R = N}} (-z_i) \left( \prod_{l=1}^R \frac{(-z_l)^{n_l}}{n_l!} \right) \EE_{(y,i)}\left(\prod_{k=1}^R (A_{t-s}^{(k)})^{n_k}\right) \diff s \\
&\; = 1 - \sum_{i=1}^R z_i \int_0^\infty\beta_i\, m(\diff y) \int_0^t p(s;(x,j),(y,i)) \EE_{(y,i)}\left(\mathrm{exp}\bigg(- \sum_{i=1}^R z_i A_{t-s}^{(i)}\bigg)\right) \diff s. 
\end{align*}
Note that the summation indices $N$ and $n_i$ have both been shifted by~1 in the third step and that in the last step~\eqref{eq_expandmgf} has been applied again. 
From~\eqref{eq_mgfTasLaplace} and the above, we obtain
\begin{align}
&\EE_{(x,j)}\left(\mathrm{exp}\bigg(- \sum_{i=1}^R z_i A_T^{(i)}\bigg) \right) 
= \lambda \, \LL_t\left\{ \EE_{(x,j)}\left(\mathrm{exp}\bigg(- \sum_{i=1}^R z_i A_t^{(i)}\bigg) \right) \right\}(\lambda) \nonumber\\
&= \lambda \cdot \frac{1}{\lambda} - \lambda \sum_{i=1}^R z_i \int_0^\infty \LL_t\left\{ \int_0^t p(s;(x,j),(y,i)) \EE_{(y,i)}\left(\mathrm{exp}\bigg(- \sum_{i=1}^R z_i A_{t-s}^{(i)}\bigg)\right) \diff s \right\}(\lambda)\,\beta_i\, m(\diff y) \nonumber\\
&= 1 - \lambda \sum_{i=1}^R z_i \int_0^\infty \mathbf{g}_\lambda((x,j),(y,i)) \LL_t\left\{\EE_{(y,i)}\left(\mathrm{exp}\bigg(- \sum_{i=1}^R z_i A_{t}^{(i)}\bigg)\right) \right\}(\lambda) \,\beta_i\,m(\diff y) \nonumber\\
&= 1 - \sum_{i=1}^R z_i \int_0^\infty \mathbf{g}_\lambda((x,j),(y,i)) \EE_{(y,i)}\left(\mathrm{exp}\bigg(- \sum_{i=1}^R z_i A_{T}^{(i)}\bigg)\right)\beta_i\, m(\diff y). \label{eq_momgeninty}
\end{align} 
We now insert~\eqref{eq_momgendepx} into the right hand side of~\eqref{eq_momgeninty} and let~$x\to 0$ on both sides. This gives
\begin{align*} 
\EE_{\orig}\left(\mathrm{exp}\bigg(- \sum_{i=1}^R z_i A_T^{(i)}\bigg) \right) 
&= 1 - \sum_{i=1}^R z_i \int_0^\infty \mathbf{g}_\lambda(\orig,(y,i)) \frac{\lambda}{\lambda + z_i} \left(1- \EE_{(y,i)}\bigg(\ee^{-(\lambda + z_i) H_\orig} \bigg) \right)\beta_i\, m(\diff y) \\ 
& - \EE_\orig\left(\mathrm{exp}\bigg(- \sum_{i=1}^R z_i A_T^{(i)}\bigg)\right) \sum_{i=1}^R z_i \int_0^\infty \mathbf{g}_\lambda(\orig,(y,i)) \EE_{(y,i)}\bigg(\ee^{-(\lambda + z_i) H_\orig} \bigg)\beta_i\, m(\diff y), 
\end{align*} 
which, when solved for the left hand side expression, results in the claimed formula~\eqref{eq_momgenspider}. 
\end{proof}

In the next result, we connect our formula more transparently with the result given by Yano~\cite{Yano2017} in Theorem~3.5; see also Theorem~4 in  Barlow, Pitman and Yor~\cite{BarlowPitmanYor1989arcsinus}, where where the formula is presented for Bessel spiders (see Section~\ref{sec_bessel}). 
For a Bessel spider, $c_\lambda$ is as given in \eqref{eq_stable}, and this means that the inverse of the local time at 0 of the underlying reflecting Bessel process is a stable subordinator. 

\begin{corollary}
\label{YA}
For  $r\in\{1,2,\dotsc, R\}$ and $z_i>0,\, i=1,2,\dotsc,r,$ 
\begin{equation} \label{eq_momgenyano_1}
\EE_\orig\left(\mathrm{exp}\bigg(- \sum_{i=1}^r z_i A_T^{(i)}\bigg)\right) 
= \frac{\displaystyle 1 -\sum_{j=1}^r \beta_j +\sum_{j=1}^r  \frac{\lambda\,\beta_j}{\lambda + z_j} \frac{c_{\lambda+z_j}}{c_\lambda}}
{\displaystyle 1 -\sum_{j=1}^r \beta_j +\sum_{j=1}^r \beta_j \frac{c_{\lambda+z_j}}{c_\lambda}},
\end{equation}
where (cf. \eqref{cr1}) 
\begin{equation*}
c_\lambda:=-\frac{d}{dS}\varphi_\lambda(0+)>0.
\end{equation*}
In particular, for $z_i>0$, $i=1,2,\dotsc,R$,
\begin{equation} \label{eq_momgenyano_0}
\EE_\orig\left(\mathrm{exp}\bigg(- \sum_{i=1}^R z_i A_T^{(i)}\bigg)\right) 
= \frac{\displaystyle \sum_{j=1}^R   \frac{\lambda\,\beta_j\,c_{\lambda+z_j}}{\lambda + z_j}}
{\displaystyle \sum_{j=1}^R \beta_j \, c_{\lambda+z_j}}
\end{equation}
\end{corollary}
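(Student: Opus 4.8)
The plan is to take the already-proved formula \eqref{eq_momgenspider} of Theorem~\ref{thm_momgenspider} and evaluate the two integral kernels appearing in its numerator and denominator in closed form. Setting $z_i=0$ for $i>r$ turns the left-hand side into $\EE_\orig\bigl(\exp(-\sum_{i=1}^r z_i A_T^{(i)})\bigr)$ and annihilates every summand with index $i>r$ on the right, since each such summand carries a prefactor $z_i$ (in the denominator) or $\lambda z_i/(\lambda+z_i)$ (in the numerator); both sums thus collapse to $j=1,\dotsc,r$. Next I would use Corollary~\ref{COR1}, which gives $\mathbf{g}_\lambda(\orig,(y,j))=c_\lambda^{-1}\varphi_\lambda(y)$, together with \eqref{CT}, which gives $\EE_{(y,j)}(\ee^{-(\lambda+z_j)H_\orig})=\varphi_{\lambda+z_j}(y)$. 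After these substitutions everything reduces to the two moment integrals $\int_0^\infty \varphi_\lambda(y)\,m(\diff y)$ and $\int_0^\infty \varphi_\lambda(y)\varphi_{\lambda+z_j}(y)\,m(\diff y)$.

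The heart of the argument is to evaluate these via the Lagrange identity for the generalized operator $\frac{d}{dm}\frac{d}{dS}$: for eigenfunctions $u,v$ with $\frac{d}{dm}\frac{d}{dS}u=\alpha u$ and $\frac{d}{dm}\frac{d}{dS}v=\beta v$ one has
\[
\diff\!\left(\frac{du}{dS}\,v-u\,\frac{dv}{dS}\right)=(\alpha-\beta)\,uv\,m(\diff y),
\]
which I would integrate over $(0,\infty)$. With $u=\varphi_\lambda$ and $v=\varphi_\mu$ where $\mu=\lambda+z_j$, the normalization $\varphi_\lambda(0)=\varphi_\mu(0)=1$ from \eqref{norm} together with $-\frac{d\varphi_\lambda}{dS}(0+)=c_\lambda$ from \eqref{cr1} makes the boundary contribution at $0$ equal to $c_\lambda-c_\mu$, while the contribution at the natural boundary $+\infty$ vanishes. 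This yields $\int_0^\infty \varphi_\lambda\varphi_\mu\,m(\diff y)=(c_\mu-c_\lambda)/(\mu-\lambda)=(c_{\lambda+z_j}-c_\lambda)/z_j$. Taking $v\equiv 1$ (i.e.\ $\beta=0$) in the same identity gives $\int_0^\infty \varphi_\lambda\,m(\diff y)=c_\lambda/\lambda$.

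Substituting these values, I expect the denominator of \eqref{eq_momgenspider} to become $1+\sum_{j=1}^r\beta_j\bigl(c_{\lambda+z_j}/c_\lambda-1\bigr)=1-\sum_{j=1}^r\beta_j+\sum_{j=1}^r\beta_j\,c_{\lambda+z_j}/c_\lambda$, matching \eqref{eq_momgenyano_1}. For the numerator, the $j$-th term
\[
\frac{\lambda z_j}{\lambda+z_j}\cdot\frac{\beta_j}{c_\lambda}\left(\frac{c_\lambda}{\lambda}-\frac{c_{\lambda+z_j}-c_\lambda}{z_j}\right)
\]
should collapse, after combining the $\beta_j z_j/(\lambda+z_j)$ and $\lambda\beta_j/(\lambda+z_j)$ pieces into $\beta_j$, to $\beta_j-\frac{\lambda\beta_j}{\lambda+z_j}\frac{c_{\lambda+z_j}}{c_\lambda}$, which produces the numerator of \eqref{eq_momgenyano_1}. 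The special case \eqref{eq_momgenyano_0} then follows by putting $r=R$: here $\sum_{j=1}^R\beta_j=1$ cancels the constant terms, and the leftover factor $c_\lambda$ cancels between numerator and denominator.

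The main obstacle I anticipate is the rigorous justification that the boundary term at the natural boundary vanishes, i.e.\ that $\frac{d\varphi_\lambda}{dS}(y)\varphi_\mu(y)-\varphi_\lambda(y)\frac{d\varphi_\mu}{dS}(y)\to 0$ as $y\to\infty$; this relies on the decay of the decreasing solution $\varphi$ and of its $S$-derivative at a natural boundary, and on the finiteness of the integrals, which is guaranteed since $\varphi_\lambda\in L^1(m)$ (a fact the computation itself confirms via $\int_0^\infty\varphi_\lambda\,m(\diff y)=c_\lambda/\lambda<\infty$). Everything else is routine bookkeeping with the normalization \eqref{norm}.
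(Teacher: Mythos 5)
Your proposal is correct and takes essentially the same route as the paper's proof: specialize \eqref{eq_momgenspider} by setting $z_i=0$ for $i>r$, substitute $\mathbf{g}_\lambda(\orig,(y,j))=c_\lambda^{-1}\varphi_\lambda(y)$ and $\EE_{(y,j)}\bigl(\ee^{-(\lambda+z_j)H_\orig}\bigr)=\varphi_{\lambda+z_j}(y)$ from \eqref{CY} and \eqref{CT}, and reduce everything to the two identities $\lambda\int_0^\infty\varphi_\lambda\,m(\diff y)=c_\lambda$ and $z_j\int_0^\infty\varphi_\lambda\varphi_{\lambda+z_j}\,m(\diff y)=c_{\lambda+z_j}-c_\lambda$, followed by the same algebra. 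The only deviation is that the paper simply cites these two integral identities from the proof of Corollary~1 in \cite{SalminenStenlund2021}, whereas you rederive them via the Lagrange identity; your derivation is sound, including the flagged boundary issue, since recurrence gives $S(\infty)=\infty$, which together with $\frac{d\varphi_\lambda}{dS}(x)=-c_\lambda+\lambda\int_0^x\varphi_\lambda\,m(\diff y)\leq 0$ and positivity of $\varphi_\lambda$ forces $\frac{d\varphi_\lambda}{dS}(\infty)=0$, so the Wronskian-type term vanishes at $+\infty$.
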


\begin{proof}
Recall from \eqref{CY} that
\[
\mathbf{g}_\lambda(\orig,(y,j)) = \frac 1{c_\lambda}\,{\varphi_\lambda(y)}
\]
and from \eqref{CT} (with the normalization $\varphi(0)=1$ as in \eqref{norm}) that
\[
\EE_{(y,j)}(\ee^{-(\lambda+z_j) H_\orig}) = \varphi_{\lambda + z_j}(y). 
\]
Hence, \eqref{eq_momgenspider} can be rewritten
\[
\EE_\orig\left(\mathrm{exp}\bigg(-\sum_{i=1}^r z_i A_T^{(i)}\bigg)\right) =
\frac{\displaystyle 1-\sum_{j=1}^r \frac{\lambda\, \beta_j\,z_j}{c_\lambda\,(\lambda + z_j)} \left( \int_0^\infty \varphi_\lambda(y) \,m(\diff y) - \int_0^\infty \varphi_\lambda(y)\varphi_{\lambda+z_j}(y) \,m(\diff y) \right)}{\displaystyle 1+ \sum_{j=1}^r \frac{ \beta_j\,z_j}{c_\lambda} \int_0^\infty \varphi_\lambda(y)\varphi_{\lambda+z_j}(y) \,m(\diff y)}. 
\] 
For the integrals, we have (cf.~\cite{SalminenStenlund2021}, proof of Corollary~1)
\begin{align*}
\lambda \int_0^\infty \varphi_\lambda(y)\,m(\diff y) &=c_\lambda, \\
z_j \int_0^\infty \varphi_\lambda(y)\varphi_{\lambda+z_j}(y) \,m(\diff y) &= -c_\lambda+c_{\lambda+z_j}
\end{align*}
by which the previous equation becomes
\begin{align*}
\EE_\orig\left(\mathrm{exp}\bigg(-\sum_{i=1}^r z_i A_T^{(i)}\bigg)\right) &= \frac{\displaystyle 1 - \sum_{j=1}^r \left(\frac{\lambda\,\beta_j\, z_j}{\lambda + z_j}\right) \left( \frac{1}{\lambda} + \frac{1}{z_j} \right) +\sum_{j=1}^r  \frac{\lambda\,\beta_j}{\lambda + z_j} \frac{c_{\lambda+z_j}}{c_\lambda}}
{\displaystyle 1 -\sum_{j=1}^r \beta_j +\sum_{j=1}^r \beta_j \frac{c_{\lambda+z_j}}{c_\lambda}}
\end{align*}
from which  \eqref{eq_momgenyano_1} easily follows, and  \eqref{eq_momgenyano_0} is immediate since
$
\sum_{j=1}^R \beta_j =1.
$
\end{proof}

\section{Examples}
In this section we highlight our results by analyzing a few different diffusion spiders, first and foremost Bessel spiders. 
For the Brownian spider, which is an important special case of Bessel spiders, it is possible to pursue the formulas further, and this evaluation is presented in a subsection of its own. 
Finally, we make some comments concerning occupation times for Walsh Brownian motion.

\subsection{Bessel spider}
\label{sec_bessel}

We now apply the result in Theorem~\ref{thm_momrec} on a particular example of a diffusion spider. 
Namely, let $\X$ be a Bessel spider with $R\geq 2$ legs. 
We define this as a diffusion spider that behaves like a Bessel process with parameter $\nu\in(-1,0)$ (i.e., dimension $2+2\nu$) on each leg and has the corresponding excursion probabilities $\beta_i>0$, $i=1,2,\dotsc R$, such that $\beta_1+\dots+\beta_R=1$. 

The Bessel spider has the self-similar property, which means that the recurrence equation in~\eqref{eq_selfrec} applies. 
Recall that this recurrence hinges on the factors $D_k^{(i)}$ given in~\eqref{eq_Dk}.
For the purpose of finding $\mathbf{g}_\lambda(\orig,(y,i))$, that is, where the point $y$ is on a particular leg $L_i$, we follow the procedure leading to Theorem~\ref{Thrm:GreenKernel1}. 
For the reflected Bessel diffusion on $[0,+\infty)$, we have from  
\cite{BorodinSalminen2015} (p.~137) that 
\[
m(dx)= 2x^{2v+1} dx,\qquad S(x)= -\frac 1{2\nu} x^{-2\nu}, \qquad  \varphi_\lambda(x)= x^{-\nu}K_\nu(x\sqrt{2\lambda}),
\]
where $K_\nu$ is a modified Bessel function of the second kind. Then
\begin{equation}
\label{eq_stable}
c_\lambda:=-\frac{d}{dS}\varphi(0+)=  \biggl(\frac 2{\sqrt{2\lambda}} \biggr)^\nu \Gamma(\nu + 1) 
= 2^{\nu/2}\Gamma(\nu + 1)\,\lambda^{-\nu/2}
\end{equation}
and, hence, 
\[
\mathbf{g}_\lambda(\orig,(y,i)) = \frac{1}{ \Gamma(\nu + 1)} \biggl( \frac{\sqrt{2\lambda}}{2} \biggr)^\nu y^{-\nu} K_\nu (y\sqrt{2\lambda}). 
\]
Note that $\mathbf{g}_\lambda(\orig,(y,i))$ is the same for all~$i$. 
Similarly, the hitting time $H_\orig$ when starting in $y\in L_i$ corresponds precisely to the hitting time of zero in the reflected Bessel process. 
The values of $D_k$ are calculated as in the proof of Theorem~3 in \cite{SalminenStenlund2021} (note, however, that in ibid.\ a different normalization is used for $m$ and~$S$), and we have for any $\lambda>0$ that
\begin{equation}\label{eq_DkBessel}
D_k^{(i)}(\lambda) = - \beta_i \binom{\nu + k - 1}{k} = - \frac{\beta_i}{k!} \sum_{j=1}^k \stirlingone{k}{j} \nu^j,
\end{equation}
where $\stirlingone{n}{k}$ are unsigned Stirling numbers of the first kind.
In the rest of this section, we will drop $\lambda$ and only write $D_k^{(i)}$, as its value does not depend on $\lambda$. 

As explained in Section~\ref{sec_singleleg}, when only considering the occupation time on a single leg $L_i$, we can directly use the earlier obtained results for skew two-sided Bessel processes. 
Hence, by \cite{SalminenStenlund2021} (Theorem~4), the $n$th moment of the occupation time on $L_i$ up to time~1 is given by 
\begin{equation}\label{eq_besselmom}
\EE_\orig\left((A^{(i)}_1)^n\right) = \sum_{l=1}^{n}\sum_{k=1}^{l} (-1)^{k-1} \frac{\Gamma(k)}{\Gamma(n)} \stirlingone{n}{l} \stirlingtwo{l}{k} \nu^{l-1} \beta_i^{k}, 
\end{equation}
where $\stirlingtwo{n}{k}$~are Stirling numbers of the second kind. 

Using the recurrence equation in Theorem~\ref{thm_momrec}, the result in~\eqref{eq_besselmom} is here extended to an explicit formula for the joint moments of the occupation times on multiple legs in a Bessel spider with $R\geq 2$ legs. 
With the numbering of legs being arbitrary, it should be clear that the formula -- although written for the first $r$ legs of the spider -- holds when considering the occupation times on any number $r$ of the $R$ legs. 
Contrary to the recursive formula in Theorem~\ref{thm_momrec} (see Remark~\ref{rem_r1}), this formula also holds when $r=1$. 

\begin{theorem}\label{thm_besrec}
For any $r\in\{1,\dotsc,R\}$ and $n_1,\dotsc,n_r \geq1$, 
\begin{equation} \label{eq_besseljointmom}
\EE_\orig\left(\prod_{i=1}^r (A_1^{(i)})^{n_i}\right) = \sum_{1\leq k_1 \leq l_1 \leq n_1} \!\!\dotsm\!\! \sum_{1\leq k_r \leq l_r \leq n_r} (-1)^{K -1} \frac{\Gamma(K)}{\Gamma(N)} \nu^{L-1} \prod_{j=1}^r \stirlingone{n_j}{l_j} \stirlingtwo{l_j}{k_j} \beta_j^{k_j}, 
\end{equation}
where $N=n_1+\dots+n_r$, $K=k_1+\dots+k_r$ and $L=l_1+\dots+l_r$. 
\end{theorem}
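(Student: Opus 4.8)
The plan is to prove \eqref{eq_besseljointmom} by strong induction on the total order $N=n_1+\dots+n_r$, using the self-similar recurrence \eqref{eq_selfrec} of Theorem~\ref{thm_momrec} together with the explicit coefficients $D_k^{(i)}$ from \eqref{eq_DkBessel}. The case $r=1$ requires no induction: for a single leg \eqref{eq_besseljointmom} reduces (with $N=n_1$, $K=k_1$, $L=l_1$) exactly to the known one-dimensional formula \eqref{eq_besselmom}, which will also serve as the ground case for any term of the recursion that collapses onto a single leg. For $r\ge 2$ and $N\ge 2$, equation \eqref{eq_selfrec} expresses $\EE_\orig(\prod_{i}(A_1^{(i)})^{n_i})$ as a finite combination of lower-order moments $\EE_\orig(\prod_{j}(A_1^{(j)})^{n_j}/(A_1^{(i)})^{k})$, each of total order $N-k<N$. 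When $n_i-k\ge 1$ this is an $r$-leg moment covered by the induction hypothesis; when $n_i-k=0$ the leg $i$ drops out and we are left with an $(r-1)$-leg (or, if $r=2$, a single-leg) moment, again covered by the hypothesis or by \eqref{eq_besselmom}. Adopting the conventions $\stirlingone{0}{0}=\stirlingtwo{0}{0}=1$ and $\stirlingtwo{p}{0}=0$ for $p\ge 1$ lets me treat the leg-removal case uniformly with the generic one.

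For the inductive step I would substitute the induction hypothesis for each reduced moment and the Stirling form $D_k^{(i)}=-\frac{\beta_i}{k!}\sum_{m}\stirlingone{k}{m}\nu^{m}$ into the right-hand side of \eqref{eq_selfrec}, and then match the result against the target \eqref{eq_besseljointmom} in the multi-index $(k_1,l_1,\dots,k_r,l_r)$. The correct alignment is dictated by the powers of $\beta_i$ and $\nu$: a term that reduces leg $i$ by $k$, carrying the internal index $m$ from $D_k^{(i)}$ and the index $(k_i',l_i')$ from the reduced-moment expansion, produces $\beta_i^{k_i'+1}$ and a $\nu$-exponent $m+l_i'-1$ on leg $i$. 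Identifying these with the target exponents forces the reindexing $k_i=k_i'+1$ and $l_i=l_i'+m$, while on every other leg the indices are carried over unchanged. With this substitution the global sign $(-1)^{K-1}$, the power $\nu^{L-1}$, the monomial $\prod_j\beta_j^{k_j}$, and the Stirling factors on the legs $j\neq i$ all reproduce themselves automatically; the content of the theorem is thereby concentrated in a purely combinatorial identity for the remaining numerical and leg-$i$ factors.

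Concretely, after factoring out the common data the claim reduces to verifying, for each fixed admissible target index, the identity
\[
\sum_{i=1}^{r}\Bigg(\prod_{j\ne i}\stirlingone{n_j}{l_j}\stirlingtwo{l_j}{k_j}\Bigg)\,C_i=\frac{\Gamma(K)}{\Gamma(N)}\prod_{j=1}^{r}\stirlingone{n_j}{l_j}\stirlingtwo{l_j}{k_j},
\]
where
\[
C_i=\sum_{k,\,m}\frac{\binom{n_i}{k}}{\binom{N}{k}}\,\frac{\Gamma(K-1)}{k!\,\Gamma(N-k)}\,\stirlingone{k}{m}\stirlingone{n_i-k}{l_i-m}\stirlingtwo{l_i-m}{k_i-1},
\]
the inner sum running over the ranges permitted by the constraints $1\le m\le k$ and $k_i-1\le l_i-m\le n_i-k$. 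Verifying this is the main obstacle. I would attack it with generating functions: the first-kind factors through the rising-factorial identity $\sum_{l}\stirlingone{n}{l}x^{l}=x^{\overline n}$ and Vandermonde-type convolutions such as $\sum_k\binom{n}{k}x^{\overline k}y^{\overline{n-k}}=(x+y)^{\overline n}$, the second-kind factor $\stirlingtwo{l_i-m}{k_i-1}$ through its dual $\sum_k\stirlingtwo{l}{k}x^{\underline k}=x^{l}$, and the ratio $\binom{n_i}{k}/\binom{N}{k}$ together with $\Gamma(K)/\Gamma(N)$ through a Beta-integral representation $\Gamma(K)/\Gamma(N)=\Gamma(N-K)^{-1}\int_0^1 t^{K-1}(1-t)^{N-K-1}\diff t$. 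The genuine difficulty is that the factor $\Gamma(K)/\Gamma(N)$ and the simultaneous presence of both kinds of Stirling numbers couple the legs, so the identity does not split into independent per-leg statements; the summation over which leg $i$ was reduced must be carried out jointly, and this is where the factor $\Gamma(K)/\Gamma(K-1)=K-1$ has to emerge from the leg-sum.

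As an independent check, I would confirm the formula against the closed-form moment generating function of Corollary~\ref{YA}: for a Bessel spider $c_{\lambda+z}/c_\lambda=(1+z/\lambda)^{-\nu/2}$, so by self-similarity $\EE_\orig(\exp(-\sum_i z_i A_T^{(i)}))=\EE_\orig((1+\lambda^{-1}\sum_i z_i A_1^{(i)})^{-1})$ equals the explicit rational expression in \eqref{eq_momgenyano_0}; expanding both sides in powers of the $z_i$ and matching the coefficient of $\prod_i z_i^{n_i}$ recovers \eqref{eq_besseljointmom}, with the binomial series of $(1+z/\lambda)^{-\nu/2}$ supplying the Stirling numbers of the first kind and the geometric expansion of the denominator supplying those of the second kind together with the factor $\Gamma(K)/\Gamma(N)$.
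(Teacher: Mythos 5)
Your skeleton is exactly the paper's (Appendix~A): induction anchored at the one-leg formula \eqref{eq_besselmom}, the self-similar recursion \eqref{eq_selfrec} with the Bessel coefficients \eqref{eq_DkBessel}, and the reindexing $k_i=k_i'+1$, $l_i=l_i'+m$ dictated by the powers of $\beta_i$ and $\nu$; your reduction of the induction step to the displayed identity for the $C_i$ is the correct skeleton, and it matches the intermediate expressions in the paper's proof. The genuine gap is that you stop precisely where the mathematical content of the theorem lies: you declare the $C_i$-identity ``the main obstacle'' and list tools (rising-factorial generating functions, Vandermonde convolutions, a Beta integral for $\Gamma(K)/\Gamma(N)$) without executing any of them. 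The paper's proof \emph{is} the verification of that identity (written out for $r=2$): after a change of summation order, the inner sums are collapsed using the convolution identities
\[
\sum_{i}\binom{n}{i}\stirlingone{i}{k}\stirlingone{n-i}{m}=\binom{k+m}{k}\stirlingone{n}{k+m},\qquad
\sum_{i}\binom{n}{i}\stirlingone{i+1}{k+1}\stirlingone{n-i}{m}=\binom{k+m}{k}\stirlingone{n+1}{k+m+1},
\]
together with $\sum_{i}\stirlingtwo{i}{k}\binom{n}{i}=\stirlingtwo{n+1}{k+1}$ and $\sum_{i}\stirlingtwo{i}{k}\binom{n}{i-1}=k\stirlingtwo{n+1}{k+1}$; the leg-$1$ block then carries the weight $b+(a+b)(k_1-1)$, and only upon summing over which leg was reduced do the weights add to $(a+b)(k_1+k_2-1)$, converting $\Gamma(k_1+k_2-1)/\Gamma(a+b+1)$ into $\Gamma(k_1+k_2)/\Gamma(a+b)$. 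You correctly predicted that the factor $K-1$ must emerge from the leg sum, but predicting it is not proving it; without these (or equivalent) identities the induction step remains an unproven claim. Note also that your proposed Beta representation $\Gamma(K)/\Gamma(N)=\Gamma(N-K)^{-1}\int_0^1 t^{K-1}(1-t)^{N-K-1}\diff t$ breaks down in the boundary case $K=N$ (e.g.\ all $n_j=k_j=l_j=1$), so even the sketched toolbox needs repair.

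Your fallback ``independent check'' against Corollary~\ref{YA} also has a flaw. You take $c_{\lambda+z}/c_\lambda=(1+z/\lambda)^{-\nu/2}$ from \eqref{eq_stable}, but \eqref{eq_stable} is computed with the unnormalized $\varphi_\lambda(x)=x^{-\nu}K_\nu(x\sqrt{2\lambda})$, whose value at $0$ depends on $\lambda$, whereas Corollary~\ref{YA} is derived under the normalization $\varphi_\lambda(0)=1$ of \eqref{norm}; with that normalization $c_\lambda\propto\lambda^{-\nu}$, so the correct ratio is $(1+z/\lambda)^{-\nu}$. (Sanity check: for $\nu=-\tfrac12$, $R=2$, $\beta_1=\beta_2=\tfrac12$, the exponent $-\nu=\tfrac12$ reproduces the arcsine transform $\sqrt{\lambda}/\sqrt{\lambda+z}$, while $-\nu/2=\tfrac14$ does not.) As written, your expansion would produce rising factorials of $\nu/2$ and the coefficient matching would fail against \eqref{eq_besseljointmom}, so the check would appear to refute the theorem rather than confirm it.
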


A proof of the theorem is given in Appendix~\ref{app_A}. 
For a particularly simple instance of the theorem above, consider the joint \emph{first} moment of the occupation times on $r$~legs in the Bessel spider. 

\begin{corollary}
For any $r\in\{1,\dotsc,R\}$, 
\begin{equation} \label{eq_besselfirstmom}
\EE_\orig\left(A_1^{(1)} A_1^{(2)} \dotsm A_1^{(r)}\right) = (-\nu)^{r-1} \beta_1 \beta_2 \dotsm \beta_r.  
\end{equation}
\end{corollary}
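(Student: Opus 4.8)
The plan is to derive \eqref{eq_besselfirstmom} directly as the special case of Theorem~\ref{thm_besrec} in which every $n_i=1$, so that the multi-indexed sum collapses almost completely. First I would set $n_1=\dots=n_r=1$ in \eqref{eq_besseljointmom}. Each inner double sum $\sum_{1\le k_j\le l_j\le n_j}$ then ranges over $1\le k_j\le l_j\le 1$, which forces $k_j=l_j=1$ for every $j$. Consequently $K=k_1+\dots+k_r=r$, $L=l_1+\dots+l_r=r$, and $N=n_1+\dots+n_r=r$, so the entire nested sum reduces to a single term.

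Next I would evaluate each factor of that surviving term. The Stirling numbers all take their trivial unit values: $\stirlingone{1}{1}=1$ and $\stirlingtwo{1}{1}=1$ for each $j$, so the product $\prod_{j=1}^r \stirlingone{n_j}{l_j}\stirlingtwo{l_j}{k_j}$ equals $1$. The power of $\nu$ becomes $\nu^{L-1}=\nu^{r-1}$, and the sign prefactor becomes $(-1)^{K-1}=(-1)^{r-1}$, which combine to $(-\nu)^{r-1}$. The Gamma ratio is $\Gamma(K)/\Gamma(N)=\Gamma(r)/\Gamma(r)=1$. Finally the product over the $\beta$'s is $\prod_{j=1}^r\beta_j^{k_j}=\beta_1\beta_2\dotsm\beta_r$. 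Assembling these gives exactly $(-\nu)^{r-1}\beta_1\beta_2\dotsm\beta_r$, as claimed.

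There is essentially no obstacle here: the corollary is a pure specialization, and the only thing to be careful about is confirming that the range constraints genuinely force a unique index choice (they do, since $n_j=1$ pins down both $k_j$ and $l_j$) and that the Gamma ratio is well defined and equal to one when $K=N=r$. As a sanity check I would verify the endpoint cases: for $r=1$ the right-hand side is $(-\nu)^0\beta_1=\beta_1$, matching $\EE_\orig(A_1^{(1)})$, which is also consistent with \eqref{eq_besselmom} at $n=1$ (there the only term is $l=k=1$, giving $\stirlingone{1}{1}\stirlingtwo{1}{1}\nu^0\beta_i^1=\beta_i$). For $r=2$ one obtains $\EE_\orig(A_1^{(1)}A_1^{(2)})=-\nu\,\beta_1\beta_2$, which I would cross-check against \eqref{LA} with $n_1=n_2=1$ using $D_1^{(i)}=-\beta_i\binom{\nu}{1}=-\beta_i\nu$ from \eqref{eq_DkBessel}. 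This confirms the constant factors are correctly bookkept and completes the argument.
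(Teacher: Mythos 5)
Your proof is correct and takes exactly the same route as the paper, which simply notes that \eqref{eq_besselfirstmom} is immediate from \eqref{eq_besseljointmom} with $n_1=\dots=n_r=1$; your careful verification that the index constraints force $k_j=l_j=1$ and that all factors collapse to $(-\nu)^{r-1}\beta_1\dotsm\beta_r$ is precisely the specialization the paper has in mind.
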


\begin{proof}
Immediate from~\eqref{eq_besseljointmom} with $n_1=\dots=n_r=1$. 
\end{proof}

\subsection{Brownian spider}
\label{BSP}
The special case of a Bessel spider with the parameter $\nu = -\frac{1}{2}$ is the \emph{Brownian spider} mentioned in the introduction, also known as Walsh Brownian motion on a finite number of legs. 
In this case, the result in Theorem~\ref{thm_besrec} has the following, somewhat simpler expression. 

\begin{theorem}\label{thm_brownrec}
Let  $\X$ be a Brownian spider and let $A_1^{(i)}$ be the occupation time on leg~$L_i$ up to time~1. 
For any $r\in\{1,\dotsc,R\}$ and $n_1,\dotsc,n_r \geq1$, 
\begin{equation} \label{eq_brownianjointmom}
\EE_\orig\left(\prod_{i=1}^r (A_1^{(i)})^{n_i}\right) = \sum_{k_1=1}^{n_1} \dotsm \sum_{k_r=1}^{n_r} 2^{-(2N-K-1)} \frac{\Gamma(K)}{\Gamma(N)} \prod_{j=1}^r \frac{\Gamma(2n_j-k_j)\,\beta_j^{k_j}}{\Gamma(k_j)\Gamma(n_j-k_j+1)} , 
\end{equation}
where $N=n_1+\dots+n_r$ and $K=k_1+\dots+k_r$. 
\end{theorem}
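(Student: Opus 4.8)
The plan is to treat Theorem~\ref{thm_brownrec} as the specialization $\nu=-\tfrac12$ of the Bessel formula~\eqref{eq_besseljointmom} and to carry out the resulting inner summation over the indices $l_1,\dots,l_r$ explicitly. First I would substitute $\nu=-\tfrac12$ into~\eqref{eq_besseljointmom} and observe that the only factors in a summand depending on a given $l_j$ are $\nu^{l_j}\stirlingone{n_j}{l_j}\stirlingtwo{l_j}{k_j}$, since $\nu^{L-1}=\nu^{-1}\prod_{j=1}^{r}\nu^{l_j}$ while the remaining factors $(-1)^{K-1}\Gamma(K)/\Gamma(N)$ and $\beta_j^{k_j}$ depend only on the $k_j$. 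Hence the sum over the $l_j$ factorizes leg by leg, and the whole problem reduces to evaluating the single-index sum
\[
T_{n,k}(\nu):=\sum_{l=k}^{n}\nu^{l}\,\stirlingone{n}{l}\stirlingtwo{l}{k}
\]
at $\nu=-\tfrac12$.

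The identity to establish for the reduction is the closed form
\[
T_{n,k}(-\tfrac12)=(-1)^{k}\,2^{-(2n-k)}\,\frac{\Gamma(2n-k)}{\Gamma(k)\,\Gamma(n-k+1)}.
\]
Granting this, the assembly is pure bookkeeping: collecting the leftover factor $\nu^{-1}=-2$, the signs $(-1)^{K-1}\prod_{j}(-1)^{k_j}=(-1)^{K-1}(-1)^{K}=-1$, and the powers of two $2\cdot 2^{-(2N-K)}=2^{-(2N-K-1)}$ turns the $\nu=-\tfrac12$ version of~\eqref{eq_besseljointmom} into exactly~\eqref{eq_brownianjointmom}, upon rewriting $\Gamma(2n_j-k_j)/(\Gamma(k_j)\Gamma(n_j-k_j+1))$ as $(2n_j-k_j-1)!/((k_j-1)!\,(n_j-k_j)!)$. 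I would present this reduction first and isolate the closed form for $T_{n,k}(-\tfrac12)$ as the one nontrivial ingredient.

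For that ingredient---the main obstacle---I would pass to the exponential generating function of $T_{n,k}(\nu)$ in $n$. Interchanging the two summations and using the standard generating functions $\sum_{n\ge l}\stirlingone{n}{l}t^{n}/n!=(-\ln(1-t))^{l}/l!$ for the unsigned Stirling numbers of the first kind and $\sum_{l\ge k}\stirlingtwo{l}{k}w^{l}/l!=(\ee^{w}-1)^{k}/k!$ for those of the second kind, with $w=-\nu\ln(1-t)$, gives
\[
\sum_{n\ge k}T_{n,k}(\nu)\,\frac{t^{n}}{n!}=\frac{\bigl((1-t)^{-\nu}-1\bigr)^{k}}{k!}.
\]
At $\nu=-\tfrac12$ this equals $(-1)^{k}(1-\sqrt{1-t})^{k}/k!$. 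Writing $u=1-\sqrt{1-t}$, so that $t=u(2-u)$ and $u=t/(2-u)$, Lagrange inversion yields
\[
[t^{n}]\,u^{k}=\frac{k}{n}\,[u^{n-k}](2-u)^{-n}=\frac{k}{n}\,2^{-(2n-k)}\binom{2n-k-1}{n-k},
\]
and multiplying by $(-1)^{k}n!/k!$ recovers the asserted closed form for $T_{n,k}(-\tfrac12)$.

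I expect the delicate point to be precisely this coefficient extraction: one must justify the generating-function manipulations (working with formal power series is enough) and apply Lagrange inversion in the correct form, or else quote the known coefficients of the powers of the Catalan-type series $1-\sqrt{1-t}$. Once $T_{n,k}(-\tfrac12)$ is in hand, the remaining steps---the factorization across legs and the consolidation of signs, Gamma factors, and powers of two---are routine algebraic simplification.
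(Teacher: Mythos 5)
Your proposal is correct, and at the top level it follows the same route as the paper: specialize the Bessel formula \eqref{eq_besseljointmom} at $\nu=-\tfrac12$, observe that the sum over $l_1,\dots,l_r$ factorizes leg by leg, and evaluate the resulting single-index Stirling sum in closed form. In fact your key identity is literally the paper's: multiplying your claimed closed form for $T_{n,k}(-\tfrac12)$ by $(-2)^n$ gives $\sum_{i=k}^n \stirlingone{n}{i}\stirlingtwo{i}{k}(-2)^{n-i} = (-1)^{n-k}\,(2n-k-1)!\big/\bigl(2^{n-k}(k-1)!\,(n-k)!\bigr)$, which is exactly the Bessel-number identity the paper invokes; your sign and power-of-two bookkeeping ($(-1)^{K-1}\cdot(-2)\cdot(-1)^{K}=2$ and $2\cdot 2^{-(2N-K)}=2^{-(2N-K-1)}$) also checks out. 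The one genuine difference is how the identity is handled: the paper simply quotes it, citing \cite{Stenlund2022} and \cite{YangQiao2011} for proofs, whereas you prove it from scratch via the exponential generating function $\sum_{n\ge k}T_{n,k}(\nu)\,t^n/n! = \bigl((1-t)^{-\nu}-1\bigr)^k/k!$ and Lagrange inversion applied to $u=1-\sqrt{1-t}$, i.e.\ $u=t/(2-u)$; your coefficient extraction $[t^n]\,u^k=\tfrac{k}{n}\,2^{-(2n-k)}\binom{2n-k-1}{n-k}$ is correct, and working with formal power series suffices since each coefficient involves only finite sums. What your version buys is self-containedness---the combinatorial heart of the theorem is proved rather than cited---at the cost of importing generating-function and Lagrange-inversion machinery; the paper's version buys brevity and situates the identity among known results on Bessel numbers of the first kind.
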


\begin{proof}
The result follows from~\eqref{eq_besseljointmom} and the identity
\begin{equation*}
\sum_{i=k}^n \stirlingone{n}{i} \stirlingtwo{i}{k} (-2)^{n-i} = (-1)^{n-k} \frac{(2n-k-1)!}{2^{n-k} (k-1)! (n-k)!} =: b(n,k), 
\end{equation*}
where $b(n,k)$ is a (signed) Bessel number of the first kind. 
For proofs of this identity and some related ones, see~\cite{Stenlund2022} and~\cite{YangQiao2011}. 
\end{proof}

\subsection{Walsh Brownian motion}
\label{WBSP}

Finally, we briefly return to the Walsh Brownian motion in its original form. 
As the state space can be the entire $\RR^2$ and is not restricted to a spider graph with a fixed number of legs, in this section we follow Walsh's terminology and talk about ``rays'' rather than ``legs'', although it should be clear that the meaning is the same. 
Here the diffusion behaves like a Brownian motion on each ray, and when it reaches the origin, the direction~$\theta$ of the next ray is selected according to some given distribution on $[0,2\pi)$. 
As we have already studied the case when this distribution is discrete -- i.e., the number of rays is at most countable -- we now consider a continuous distribution. 

The diffusion will, almost surely, choose a new direction every time it reaches $\orig$, so that it visits no ray more than once. 
Furthermore, the probability of visiting a particular ray (i.e., a ray whose angle is a fixed value) is zero. 
For this reason, it is not meaningful to consider the occupation times on specific rays in this case. 
Rather, we can consider the occupation times within sectors of the $\RR^2$ plane. 
Let $0=\theta_0<\theta_1<\theta_2<\dots<\theta_R=2\pi$ be fixed angles and let $S_i$ consist of all points with angle in $[\theta_{i-1}, \theta_i)$, so that $\RR^2$ is partitioned into $R$ non-overlapping sectors $S_1, S_2, \dotsc, S_R$. 
If $\Theta(X_t)$ denotes the angle of the ray on which the diffusion $X$ is located at time~$t$, then
\[
A_t^{(S_i)} = \int_0^t \idop_{[\theta_{i-1}, \theta_i)}(\Theta(X_s)) \diff s
\]
is the occupation time of the diffusion within sector~$S_i$ up to time~$t$. 

With respect to the occupation time on a sector, the outcome is the same as if all rays within the sector were combined and mapped onto a single ray. 
Therefore, the occupation times of a Walsh Brownian motion on the sectors $S_1, \dotsc, S_R$ precisely correspond to the occupation times on the $R$ legs of a Brownian spider. 
Thus, the result in Theorem~\ref{thm_brownrec} applies for the occupation times on sectors of a Walsh Brownian motion, with $\beta_i$ being equal to the probability of selecting an angle within sector~$S_i$ when at the origin. 
Naturally, if the diffusion behaves like a Bessel process with parameter $\nu\in(-1,0)$ on each ray (this could, perhaps, be called a ``Walsh Bessel process''), then Theorem~\ref{thm_besrec} applies instead.

\clearpage
\begin{appendices}

\bigskip
\section{}
\label{app_A}
\begin{proof}[Proof of Theorem~\ref{thm_besrec}]
The known equation~\eqref{eq_besselmom} coincides with \eqref{eq_besseljointmom} where $r=1$, showing that the theorem holds in that particular case. 
As in the proof of Theorem~\ref{thm_momrec}, we here prove the statement for $r=2$, say the two legs $L_1$ and $L_2$ in the Bessel spider. 
This will be enough to demonstrate the procedure, which can then readily be repeated for a larger value of~$r$ with more tedious but hardly more difficult work. 

To begin with, we repeat the statement in~\eqref{eq_besseljointmom} for $r=2$: 
\begin{equation}
\EE_\orig\left((A_1^{(1)})^{n_1} (A_1^{(2)})^{n_2} \right) 
= \sum_{l_1=1}^{n_1} \sum_{k_1=1}^{l_1} \sum_{l_2=1}^{n_2} \sum_{k_2=1}^{l_2} (-1)^{k_1+k_2 -1} \frac{\Gamma(k_1+k_2)}{\Gamma(n_1+n_2)} \stirlingone{n_1}{l_1} \stirlingtwo{l_1}{k_1}\stirlingone{n_2}{l_2} \stirlingtwo{l_2}{k_2} \nu^{l_1+l_2-1} \beta_1^{k_1} \beta_2^{k_2}. \label{eq_2jointmom}
\end{equation}
We prove this statement by induction using the recurrence in~\eqref{eq_selfrec} and the known moment formula~\eqref{eq_besselmom} for the occupation time on a single leg. 
For the simplest case $n_1=n_2=1$, we see from~\eqref{eq_selfrec} that
\[
\EE_\orig \left( A^{(1)}_{1} A^{(2)}_{1} \right) = \frac{1}{2} D_1^{(1)} \EE_\orig \left( A^{(2)}_{1} \right) + \frac{1}{2} D_1^{(2)} \EE_\orig \left( A^{(1)}_{1} \right) = -\beta_1 \beta_2 \nu, 
\]
since $D_1^{(i)} = - \beta_i \nu$ and $\EE_\orig(A^{(i)}_{1}) = \beta_i$. Thus, \eqref{eq_2jointmom} holds in this case. 
Assume now that~\eqref{eq_2jointmom} holds whenever $\{1\leq n_1\leq a-1, 1\leq n_2\leq b\}$ or $\{1\leq n_1\leq a, 1\leq n_2\leq b-1\}$ for some integers $a,b\geq 1$. 
We proceed to show that then~\eqref{eq_2jointmom} holds also for $n_1=a, n_2=b$. 

First, we apply the recurrence equation~\eqref{eq_selfrec} to get 
\begin{align}
\EE_\orig \left( (A^{(1)}_{1})^{a} (A^{(2)}_{1})^{b} \right) &= \frac{D_a^{(1)}}{\binom{a+b}{a}} \EE_\orig \left( (A^{(2)}_{1})^{b} \right) + \sum_{i=1}^{a-1} \frac{\binom{a}{i}}{\binom{a+b}{i}} D_i^{(1)} \EE_\orig \left( (A^{(1)}_{1})^{a-i} (A^{(2)}_{1})^{b} \right) \nonumber\\
& + \frac{D_b^{(2)}}{\binom{a+b}{b}} \EE_\orig \left( (A^{(1)}_{1})^{a} \right) + \sum_{i=1}^{b-1} \frac{\binom{b}{i}}{\binom{a+b}{i}} D_i^{(2)} \EE_\orig \left( (A^{(1)}_{1})^{a} (A^{(2)}_{1})^{b-i} \right). \label{eq_4terms}
\end{align}
Here we have separated the terms with moments of the occupation time on only one leg, for which~\eqref{eq_besselmom} applies, and the terms with joint moments of the occupation times on both legs, for which we can apply the induction assumption. 
In the first case, we insert the expressions in~\eqref{eq_DkBessel} and~\eqref{eq_besselmom} to get
\begin{align}
 \frac{D_a^{(1)}}{\binom{a+b}{a}} \EE_\orig \left( (A^{(2)}_{1})^{b} \right) 
&= \frac{1}{\binom{a+b}{a}} \left( - \frac{\beta_1}{a!} \sum_{l_1=1}^a \stirlingone{a}{l_1} \nu^{l_1} \right) \sum_{l_2=1}^{b}\sum_{k_2=1}^{l_2} (-1)^{k_2-1} \frac{\Gamma(k_2)}{\Gamma(b)} \stirlingone{b}{l_2} \stirlingtwo{l_2}{k_2} \nu^{l_2-1} \beta_2^{k_2} \nonumber \\
&= b \sum_{l_1=1}^a \sum_{k_1=1}^1 \sum_{l_2=1}^{b}\sum_{k_2=1}^{l_2} (-1)^{k_1+k_2-1} \frac{\Gamma(k_1+k_2-1)}{\Gamma(a+b+1)} \stirlingone{a}{l_1} \stirlingtwo{l_1}{k_1} \stirlingone{b}{l_2} \stirlingtwo{l_2}{k_2} \nu^{l_1+l_2-1} \beta_1^{k_1} \beta_2^{k_2}. \label{eq_DaEA2b}
\end{align}
Note that the variable $k_1$ only takes the value 1 here, but it is nevertheless added so that the expression above resembles the form of~\eqref{eq_2jointmom} more closely. 
Next, we turn to the following term in~\eqref{eq_4terms}, assuming for the moment that $a>1$ so that the sum is not empty. Inserting~\eqref{eq_DkBessel} and the induction assumption, we obtain
\begin{align}
& \sum_{i=1}^{a-1} \frac{\binom{a}{i}}{\binom{a+b}{i}} D_i^{(1)} \EE_\orig \left( (A^{(1)}_{1})^{a-i} (A^{(2)}_{1})^{b} \right) 
= \sum_{i=1}^{a-1} \frac{\binom{a}{i}}{\binom{a+b}{b+i}} D_{a-i}^{(1)} \EE_\orig \left( (A^{(1)}_{1})^{i} (A^{(2)}_{1})^{b} \right) \nonumber\\
& = \sum_{i=1}^{a-1} \frac{\binom{a}{i}}{\binom{a+b}{b+i}} \left( - \frac{\beta_1}{(a-i)!} \sum_{j=1}^{a-i} \stirlingone{a-i}{j} \nu^j \right) \sum_{\substack{1\leq k_1 \leq l_1 \leq i, \\[2pt] 1\leq k_2 \leq l_2 \leq b}} \hspace{-0.7em} (-1)^{k_1+k_2 -1} \frac{\Gamma(k_1+k_2)}{\Gamma(b+i)} \stirlingone{i}{l_1} \stirlingone{b}{l_2} \stirlingtwo{l_1}{k_1} \stirlingtwo{l_2}{k_2} \nu^{l_1+l_2-1} \beta_1^{k_1} \beta_2^{k_2} \nonumber\\
& = \hspace{-0.4em} \sum_{\substack{1\leq k_1 \leq l_1 \leq i \leq j \leq a-1, \\[2pt] 1\leq k_2 \leq l_2 \leq b}} \hspace{-2em} (-1)^{k_1+k_2}  \nu^{a+l_1+l_2-j-1} \beta_1^{k_1+1} \beta_2^{k_2} \frac{(b+i) \Gamma(k_1+k_2)}{\Gamma(a+b+1)} \binom{a}{i} \stirlingone{i}{l_1} \stirlingone{b}{l_2} \stirlingtwo{l_1}{k_1} \stirlingtwo{l_2}{k_2} \stirlingone{a-i}{a-j} \nonumber\\
& = \hspace{-0.6em} \sum_{\substack{1\leq k_1 \leq j \leq a-1, \\[2pt] 1\leq k_2 \leq l_2 \leq b}} \hspace{-1.1em} (-1)^{k_1+k_2}  \nu^{l_2+j} \beta_1^{k_1+1} \beta_2^{k_2} \frac{\Gamma(k_1+k_2)}{\Gamma(a+b+1)} \stirlingone{b}{l_2} \stirlingtwo{l_2}{k_2} \left( \sum_{l_1=k_1}^j \stirlingtwo{l_1}{k_1} \sum_{i=l_1}^{a+l_1-j-1} \hspace{-0.7em} (b+i) \binom{a}{i}\stirlingone{i}{l_1} \stirlingone{a-i}{j-l_1+1} \right)\!, \label{eq_sextuple}
\end{align}
where, in the last step, we have changed the order of summation according to the pattern
\begin{align*}
\sum_{k=1}^{n} \sum_{l=k}^{n} \sum_{i=l}^{n} \sum_{j=i}^{n} f(i, j, k, l) 
&= \sum_{k=1}^{n} \sum_{l=k}^{n} \sum_{i=l}^{n} \sum_{j=l}^{n+l-i} f(i, n+l-j, k, l) \\
&= \sum_{k=1}^{n} \sum_{j=k}^{n} \sum_{l=k}^{j} \sum_{i=l}^{n+l-j} f(i, n+l-j, k, l).
\end{align*}
To simplify the expression further, we use the two closely related identities
\[
\sum_{i=k}^{n-m} \stirlingone{i}{k} \stirlingone{n-i}{m} \binom{n}{i} = \binom{k+m}{k} \stirlingone{n}{k+m}, \qquad
\sum_{i=k}^{n-m} \stirlingone{i+1}{k+1} \stirlingone{n-i}{m} \binom{n}{i} = \binom{k+m}{k} \stirlingone{n+1}{k+m+1}, 
\]
the first of which is well known~\cite{GrahamKnuthPatashnik1994} and is also utilized in the proof of the latter (see Lemma~2 in~\cite{SalminenStenlund2021}). 
Applying these identities, we get that the innermost sum in~\eqref{eq_sextuple} is equal to
\begin{align*}
&\sum_{i=l_1}^{a+l_1-j-1} (b+i) \binom{a}{i}\stirlingone{i}{l_1} \stirlingone{a-i}{j-l_1+1} \\
&\qquad = b \sum_{i=l_1}^{a-(j-l_1+1)} \binom{a}{i}\stirlingone{i}{l_1} \stirlingone{a-i}{j-l_1+1} + a \sum_{i=l_1}^{a-(j-l_1+1)} \binom{a-1}{i-1}\stirlingone{i}{l_1} \stirlingone{a-i}{j-l_1+1} \\
&\qquad = b \binom{j+1}{l_1} \stirlingone{a}{j+1} + a \binom{j}{l_1-1} \stirlingone{a}{j+1} \\
&\qquad = \stirlingone{a}{j+1} \left( b \binom{j}{l_1} + (a+b) \binom{j}{l_1-1} \right). 
\end{align*}
The expression inside the parenthesis in~\eqref{eq_sextuple} becomes
\begin{align*}
\sum_{l_1=k_1}^j \stirlingtwo{l_1}{k_1} \sum_{i=l_1}^{a+l_1-j-1} \!(b+i) \binom{a}{i}\stirlingone{i}{l_1} \stirlingone{a-i}{j-l_1+1} 
& = \stirlingone{a}{j+1} \left( b \sum_{l_1=k_1}^j \stirlingtwo{l_1}{k_1} \binom{j}{l_1} + (a+b) \sum_{l_1=k_1}^j \stirlingtwo{l_1}{k_1} \binom{j}{l_1-1} \!\right) \\
& = \stirlingone{a}{j+1}\stirlingtwo{j+1}{k_1+1} \bigl( b+ (a+b)k_1 \bigr), 
\end{align*}
where the second step follows by the identities
\[
\sum_{i=k}^n \stirlingtwo{i}{k} \binom{n}{i} = \stirlingtwo{n+1}{k+1}, \qquad 
\sum_{i=k}^n \stirlingtwo{i}{k} \binom{n}{i-1} = k \stirlingtwo{n+1}{k+1}, 
\]
see Equation~(6.15) in~\cite{GrahamKnuthPatashnik1994} and the proof of Theorem~4 in~\cite{SalminenStenlund2021}, respectively. 
Inserting this into~\eqref{eq_sextuple} gives
\begin{align*}
& \sum_{i=1}^{a-1} \frac{\binom{a}{i}}{\binom{a+b}{i}} D_i^{(1)} \EE_\orig \left( (A^{(1)}_{1})^{a-i} (A^{(2)}_{1})^{b} \right) \\
&\qquad = \!\sum_{\substack{1\leq k_1 \leq j \leq a-1, \\[2pt] 1\leq k_2 \leq l_2 \leq b}} \hspace{-12pt} (-1)^{k_1+k_2}  \nu^{l_2+j} \beta_1^{k_1+1} \beta_2^{k_2} \frac{\Gamma(k_1+k_2)}{\Gamma(a+b+1)} \stirlingone{b}{l_2} \stirlingtwo{l_2}{k_2} \stirlingone{a}{j+1}\stirlingtwo{j+1}{k_1+1} \bigl( b+ (a+b)k_1 \bigr) \\
&\qquad = \!\sum_{\substack{2\leq k_1 \leq j \leq a, \\[2pt] 1\leq k_2 \leq l_2 \leq b}} \hspace{-12pt} (-1)^{k_1+k_2-1}  \nu^{l_2+j-1} \beta_1^{k_1} \beta_2^{k_2} \frac{\Gamma(k_1+k_2-1)}{\Gamma(a+b+1)} \stirlingone{a}{j} \stirlingtwo{j}{k_1} \stirlingone{b}{l_2} \stirlingtwo{l_2}{k_2} \bigl( b+ (a+b)(k_1-1) \bigr). 
\end{align*}
When this is combined with~\eqref{eq_DaEA2b}, replacing the summation index $j$ with $l_1$ in the process, the result is 
\begin{align*}
& \frac{D_a^{(1)}}{\binom{a+b}{a}} \EE_\orig \left( (A^{(2)}_{1})^{b} \right) + \sum_{i=1}^{a-1} \frac{\binom{a}{i}}{\binom{a+b}{i}} D_i^{(1)} \EE_\orig \left( (A^{(1)}_{1})^{a-i} (A^{(2)}_{1})^{b} \right) \\
&\qquad = \sum_{\substack{1\leq k_1 \leq l_1 \leq a, \\[2pt] 1\leq k_2 \leq l_2 \leq b}} \hspace{-12pt} (-1)^{k_1+k_2-1}  \frac{\Gamma(k_1+k_2-1)}{\Gamma(a+b+1)} \stirlingone{a}{l_1} \stirlingtwo{l_1}{k_1} \stirlingone{b}{l_2} \stirlingtwo{l_2}{k_2} \nu^{l_1+l_2-1} \beta_1^{k_1} \beta_2^{k_2} \bigl( b + (a+b)(k_1-1) \bigr). 
\end{align*}
Note that this coincides with~\eqref{eq_DaEA2b} when $a=1$, so the temporary assumption $a>1$ is not necessary for this expression to hold. 
This is half the right hand side of~\eqref{eq_4terms}. 
We obtain the other half simply by interchanging the roles of $a$ and $b$ in the expression above (renaming the summation indices accordingly). 
Thus, adding all the terms together, we get
\begin{align*}
\EE_\orig \left( (A^{(1)}_{1})^{a} (A^{(2)}_{1})^{b} \right) 
&= \sum_{\substack{1\leq k_1 \leq l_1 \leq a, \\[2pt] 1\leq k_2 \leq l_2 \leq b}} \hspace{-12pt} (-1)^{k_1+k_2-1}  \frac{\Gamma(k_1+k_2-1)}{\Gamma(a+b+1)} \stirlingone{a}{l_1} \stirlingtwo{l_1}{k_1} \stirlingone{b}{l_2} \stirlingtwo{l_2}{k_2} \nu^{l_1+l_2-1} \beta_1^{k_1} \beta_2^{k_2} \cdot \\[-12pt]
& \hspace{6cm} \Bigl( b + (a+b)(k_1-1) + a + (a+b)(k_2-1) \Bigr) \\[8pt]
&= \sum_{\substack{1\leq k_1 \leq l_1 \leq a, \\[2pt] 1\leq k_2 \leq l_2 \leq b}} \hspace{-12pt} (-1)^{k_1+k_2-1}  \frac{\Gamma(k_1+k_2)}{\Gamma(a+b)} \stirlingone{a}{l_1} \stirlingtwo{l_1}{k_1} \stirlingone{b}{l_2} \stirlingtwo{l_2}{k_2} \nu^{l_1+l_2-1} \beta_1^{k_1} \beta_2^{k_2}, 
\end{align*}
which proves that~\eqref{eq_2jointmom} indeed holds for $n_1=a, n_2=b$. By induction, it holds for any $n_1, n_2 \geq 1$, and the theorem is thereby proved when $r=2$. 
As pointed out in the beginning of the proof, the same method can be repeated for increasingly larger values of $r$ as well. 
However, due to the long expressions we include only the proof given above and trust that the reader can recognize how it generalizes to~\eqref{eq_besseljointmom} for higher~$r$. 
\end{proof}

\end{appendices}

\clearpage
%\bibliographystyle{mybibstyle}
%\bibliography{bibliography-spider}

\end{document}